\date{}
\newtheorem{theo}{Theorem}
\newtheorem{lem}[theo]{Lemma}
\newtheorem{prop}[theo]{Proposition}
\newtheorem{nota}[theo]{Notation}
\newcommand{\beq}{\begin{equation}}
\newcommand{\eeq}{\end{equation}}
\newcommand{\h}{{\mathbb{H}}}
\newcommand{\Z}{{\mathbb{Z}}}
\newcommand{\Ker}{\rm Ker\,}
\renewcommand{\mod}{{\rm mod}}
\begin{document}

\title{Reidemeister-Schreier's algorithm for 2-coverings of Seifert manifolds}

\footnotetext{\noindent{Mathematics Subject Classification 2010 : Primary 57M10, 
Secondary 57M05, 57S25\\
Keywords and phrases :  covering, Seifert manifolds}}

\author{A. Bauval, C. Hayat} 

\maketitle

\begin{abstract}

\noindent 
It is classical that given any Seifert structure on $N$,  Reidemeister-Schreier's algorithm  produces a presentation of all index 2 subgroups   of $\pi_1(N)$, described   as the fundamental group of some Seifert manifolds. The new result of this article is  concise  formulas that gather all possible cases.
\end{abstract}

\markboth{RS's algorithm for 2-coverings of Seifert manifolds}{RS's algorithm for 2-coverings of Seifert manifolds}\markright{RS's algorithm for 2-coverings of Seifert manifolds}

\section{Introduction}\label{intro}
The index 2 subgroups  of $\pi_1(N)$ are  kernel of epimorphisms $\varphi : \pi_1(N) \to \Z_2.$ When $N$ is a Seifert manifold (described by its Seifert invariants) and one needs  a list of all its 2-coverings, it is necessary to explicit a combinatoric way to gather together all the data. Theorems 1 and 3 give  Reidemeister-Schreier concise answers, \cite{MKS}, \cite{zvc}.

The  notations are given in the section after this introduction. Section 3 studies the situation where the morphism $\varphi$ maps the generator corresponding to the regular fiber to 1. This is Theorem 1. Theorem 3 stating the result when $\varphi$ maps the generator corresponding to the regular fiber to $0$ is expressed in the fourth section. The following subsections prove this theorem. In the first subsection, a crucial lemma (Lemma \ref{lem:killvj}) proves that  if two morphisms from $\pi_1(N)$ to $\Z_2$ map $m\geq 0$ exceptional fibres to 1 and all the other generators to $0$ then their kernels are isomorphic. This gives  importance to Theorem 11  which explicits the kernel with these hypothesis. The second subsection studies the situation where $\varphi$ maps some generators corresponding to the basis  to $1$ and all the other generators to $0$. Theorem 3 is proved.

Each index 2 subgroup  of $\pi_1(N)$ is the fundamental group of a Seifert manifold $M$ and an associated  free involution $\tau$. The motivation of this study came to us through the study of Borsuk-Ulam type theorem for $(M,\tau)$ \cite{mat}, \cite{BGHZ}.

\section{Seifert invariants for the kernel}\label{RSS}

$N$ is {\sl any} Seifert manifold (orientable or not), as introduced in \cite{s}.

Following the notations of  \cite{orlik}, from now on, $N$ will be a Seifert manifold described by a list of Seifert invariants  $$\{e; (\epsilon, g);(a_1,b_1),\ldots,(a_n,b_n)\}.$$
We do not need them to be ``normalized'' (as defined in \cite{s} or \cite{orlik}): we only assume that $e$ is an integer, the type $\epsilon$ is detailed below, $g$ is the genus of the base surface, and for each $k$, the integers $a_k,b_k$ are coprime and $a_k\ne 0$.

Such invariants give the following presentation of the fundamental group of $N$:
$$\pi_1(N)=\left<{\begin{matrix}s_1,\ldots,s_n\\v_1,\ldots,v_{g'}\\h\end{matrix}\left|
\begin{matrix}
 [s_k,h]\quad\text{and}\quad s_k^{a_k}h^{b_k},&1\le k\le n\\
v_jhv_j^{-1}h^{-\varepsilon_j},&1\le j \le g'\\
h^{-e}s_1\ldots s_nV&\end{matrix}\right.}\right>.$$

\begin{itemize}
\item The type $\epsilon$ of $N$ equals:
	\begin{itemize}
	\item[$o_1$] if both the base surface and the total space are orientable  (which forces all $\varepsilon_j$'s to equal $1$);
	\item[$o_2$] if the base is orientable and the total space is non-orientable, hence $g\ge 1$ (all $\varepsilon_j$'s are assumed to equal $-1$);
	\item[$n_1$] if both the base and the total space are non-orientable (hence $g\ge 1$) and moreover, all $\varepsilon_j$'s equal $1$;
	\item[$n_2$] if the base is non-orientable (hence $g\ge 1$) and the total space is orientable (which forces all $\varepsilon_j$'s to equal $-1$);
	\item[$n_3$] if both the base and the total space are non-orientable and moreover, all $\varepsilon_j$'s equal $-1$ except $\varepsilon_1=1$, and $g\ge 2$;
	\item[$n_4$] if both the base and the total space are non-orientable and moreover, all $\varepsilon_j$'s equal $-1$ except $\varepsilon_1=\varepsilon_2=1$, and $g\ge 3$.
	\end{itemize}
\item The orientability of the base and its genus $g$ determine the number $g'$ of the generators $v_j$'s and the word $V$ in the last relator of $\pi_1(N)$:
	\begin{itemize}
	\item when the base is orientable, $g'=2g$ and  $V=[v_1,v_2]\ldots[v_{2g-1},v_{2g}]$;
	\item when the base is non-orientable, $g'=g$ and  $V=v_1^2\ldots v_g^2$.
	\end{itemize}
\item The generator $h$ corresponds to the generic regular fibre.
\item The generators $s_k$ for $1\le k\le n$ correspond to (possibly) exceptional fibres.
\end{itemize}

The subgroups of index 2 of $\pi_1(N)$ are the kernel of epimorphism $\varphi : \pi_1(N) \to \Z_2.$ The two next subsections describe $\Ker(\varphi)$ as the fundamental group of some Seifert manifold given by a similar list of invariants, when $\varphi(h)=1$ (Theorem \ref{theo:phi(h)=1}) and when $\varphi(h)=0$ (Theorem \ref{theo:phi(h)=0}).

\section{If $\varphi$ maps $h$ to $1$}\label{phi(h)=1}

\begin{theo}\label{theo:phi(h)=1}If $\varphi$ maps $h$ to $1$ then its kernel is the fundamental group of the Seifert manifold given by the following invariants:
$$\{\frac{e-m}2-m'; (\epsilon, g);(a_1,b'_1  ),\ldots,(a_n,b'_n)\},$$
where $b'_k=\begin{cases}\frac{b_k}2&\text{if $b_k$ is even}\\\frac{a_k+b_k}2&\text{if $b_k$ is odd}\end{cases}$, $m$ is the number of odd $b_k$'s, and $$\begin{cases}m'=0&\text{if $\epsilon=o_1,n_2$}\\m'\equiv\sum\varphi(v_j) (\mod~2)&\text{if $\epsilon=o_2,n_1$}\\m'\equiv\varphi(v_1) (\mod~2)&\text{if $\epsilon=n_3$}\\m'\equiv\varphi(v_1)+\varphi(v_2) (\mod~2)&\text{if $\epsilon=n_4$}.\end{cases}$$\end{theo}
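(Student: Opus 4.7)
The plan is to apply Reidemeister-Schreier directly with the Schreier transversal $T=\{1,h\}$, which is available because $\varphi(h)=1$. First I would check consistency of $\varphi$: the relator $s_k^{a_k}h^{b_k}$ forces $\varphi(s_k)\equiv b_k\pmod 2$, and combined with $\gcd(a_k,b_k)=1$ this also forces $a_k$ to be odd whenever $b_k$ is odd---precisely the condition under which $b'_k=(a_k+b_k)/2$ is an integer. The relators $v_jhv_j^{-1}h^{-\varepsilon_j}$ impose no constraint on $\varphi(v_j)$ (since $\varepsilon_j=\pm 1$ is odd), and the long relator forces $e\equiv m\pmod 2$, so $(e-m)/2\in\Z$.

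Next I set $H:=h^2$, $S_k:=s_kh^{-\varphi(s_k)}$ and $V_j:=v_jh^{-\varphi(v_j)}$; each lies in $\Ker\varphi$. A routine Schreier rewrite, using $[s_k,h]=1$ and $v_jhv_j^{-1}=h^{\varepsilon_j}$ to identify the ``second'' generators produced from $t=h$ with the first up to a power of $H$, shows that $H$, the $S_k$'s and the $V_j$'s generate $\Ker\varphi$. The small rewritten relations follow immediately: $[S_k,H]=1$, $V_jHV_j^{-1}=H^{\varepsilon_j}$, and, by splitting on the parity of $b_k$ and computing $S_k^{a_k}$ from $h^{-b_k}$, the exceptional-fibre relation $S_k^{a_k}H^{b'_k}=1$ with $b'_k$ as announced.

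The heart of the argument is rewriting the long relator $h^{-e}s_1\cdots s_n V$. Substituting $s_k=S_kh^{\varphi(s_k)}$ and using that each $S_k$ commutes with $h$, the $s$-part telescopes to $S_1\cdots S_n\cdot h^m$, so the relator becomes $h^{m-e}S_1\cdots S_n\cdot V$. Writing $v_j=V_jh^{\alpha_j}$ with $\alpha_j:=\varphi(v_j)$ and using $V_jh=h^{\varepsilon_j}V_j$, a direct computation in each of the six types $\epsilon$ yields $V=H^{\mu}V_{\mathrm{new}}$, where $V_{\mathrm{new}}$ is the analog of $V$ written in the $V_j$'s and $\mu\in\Z$. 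The parity of $\mu$ is precisely the announced $m'$: for non-orientable base, each factor $v_j^2$ contributes $H^{\alpha_j}$ when $\varepsilon_j=1$ and $1$ when $\varepsilon_j=-1$, which accounts for $n_1,n_2,n_3,n_4$; for orientable base, the commutator $[v_{2i-1},v_{2i}]$ contributes $1$ when $\varepsilon_j=1$ and $H^{\alpha_{2i}-\alpha_{2i-1}}$ when $\varepsilon_j=-1$, whose exponent has the same parity as $\alpha_{2i-1}+\alpha_{2i}$, accounting for $o_1$ and $o_2$. Assembling gives $H^{(m-e)/2-\mu}S_1\cdots S_n V_{\mathrm{new}}=1$, which is the standard Seifert relator of a manifold with the claimed invariants.

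The main obstacle is not conceptual but combinatorial: one must carry the case analysis through all six types $\epsilon$ and keep careful track of signs when sliding $h$-powers past $V_j$'s, whose (anti)commutation with $h$ is governed by $\varepsilon_j$. The integer ambiguity in $\mu$ beyond its parity is absorbed by the standard non-uniqueness of Seifert invariants (shifting $e'$ while correspondingly adjusting the $b'_k$'s by multiples of $a_k$), which is why $m'$ is announced only modulo~$2$.
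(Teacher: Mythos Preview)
Your proof is correct and follows essentially the same route as the paper: both use the Schreier transversal $\{1,h\}$ and the substitution $s_k\mapsto s_kh^{-\varphi(s_k)}$, $v_j\mapsto v_jh^{-\varphi(v_j)}$ to reduce to generators on which $\varphi$ vanishes. The only organizational difference is that the paper performs this substitution first (obtaining a new Seifert presentation of $\pi_1(N)$ with invariants $\{e-m-2m';(\epsilon,g);(a_k,2b'_k)\}$) and then applies Reidemeister--Schreier to that cleaner presentation, whereas you carry out both steps simultaneously; the resulting case analysis and the parity bookkeeping for $m'$ are identical.
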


Note that in the non-orientable cases, $m'$ is only determined modulo $2$, which is sufficient to determine the Seifert manifold.

\begin{proof}Necessarily, all $a_k$'s are odd, $\varphi(s_k)=b_k(\mod~2)$, and $e+m$ is even. Let us choose a presentation of $\pi_1(N)$ adapted to $\varphi$ by keeping $h$ untouched but taking new generators $s'_k$, $v'_j$ mapped to $0$ by $\varphi$:
$$s'_k=\begin{cases}s_k&\text{if $b_k$ is even}\\h^{-1}s_k&\text{if $b_k$ is odd}\end{cases}\qquad v'_k=\begin{cases}v_k&\text{if $\varphi(v_k)=0$}\\h^{-1}v_k&\text{if $\varphi(v_k)=1$.}\end{cases}$$
The new presentation of $\pi_1(N)$ corresponds to the Seifert invariants
$$\{e-m-2m'; (\epsilon, g);(a_1,2b'_1),\ldots,(a_n,2b'_n)\},$$
where the $b'_k$'s and $m$ are as stated, and $$m'=\begin{cases}0&\text{if $\epsilon=o_1,n_2$}\\
\#\{j~\text{odd}\mid\varphi(v_j)=1\}-\#\{j~\text{even}\mid\varphi(v_j)=1\}&\text{if $\epsilon=o_2$}\\
\#\{j\mid \varphi(v_j)=1~\text{and}~\varepsilon_j=1\}&\text{if $\epsilon=n_1,n_2,n_3,n_4$}\end{cases}$$
(hence $m'$ fullfills the condition of the statement).

Choosing $q=h$, Reidemeister-Schreier's algorithm produces a presentation of $\Ker(\varphi)$ with
\begin{itemize}
\item generators:
	\begin{itemize}
	\item for $1\le k\le n$, $(y_k,y'_k)=(s'_k,qs'_kq^{-1})$
	\item for $1\le j\le g'$, $(x_j,x'_j)=(v'_j,qv'_jq^{-1})$
	\item $(z,z')=(hq^{-1},qh)$
	\end{itemize}
\item relations:
	\begin{itemize}
	\item $z=1$
	\item  for $1\le k\le n$, $y'_k=y_k$, $[y_k,z']=1$ and $y_k^{a_k}z'^{b'_k}=1$
	\item for $1\le j\le g'$, $x_jz'x_j^{-1}z'^{-\varepsilon_j}=1$ and $x'_j=\begin{cases}x_j&\text{if $\varepsilon_j=1$}\\z'x_j&\text{if $\varepsilon_j=-1$}\end{cases}$
	\item $y_1\ldots y_nW=z'^{(e+m+2m')/2}$, with $$W=\begin{cases}[x_1,x_2]\ldots[x_{2g-1},x_{2g}]&\text{if $\epsilon=o_1,o_2$}\\x_1^2\ldots x_g^2&\text{if $\epsilon=n_1,n_2,n_3,n_4$.}\end{cases}$$
	\end{itemize}
\end{itemize}
Eliminating the redundant generators $z,y'_k,x'_k$ yields the result.
\end{proof}

\section{If $\varphi$ maps $h$ to $0$}\label{phi(h)=0}
\subsubsection{Results of the two next subsections}
Denote by $m$ the number of $s_k$'s mapped to $1$ by $\varphi$ and assume (if $m>0$) that these $m$ $s_k$'s are the first ones. (This reordering of the $s_k$'s may be achieved by an obvious change of presentation of $\pi_1(N)$, using repeatedly the equation $ss'=(ss's^{-1})s$.) The next theorem announces the conclusion of Theorems \ref{theo:s1smvj} and \ref{theo:vj}, which will be proved in the two next subsections. The following notations will be used to state the results.

\begin{nota}\label{nota:FOCFm}
The notations $F_{OC}$ and $F_m$ will respectively denote
$$F_{OC}=(a_1,b_1),(a_1,-b_1),(a_2,b_2),(a_2,-b_2),\ldots,(a_n,b_n),(a_n,-b_n)$$
$$F_m=(a_1/2,b_1),(a_2/2,b_2),\ldots ,(a_m/2,b_m),$$$$(a_{m+1},b_{m+1}),(a_{m+1},b_{m+1}),(a_{m+2},b_{m+2}),(a_{m+2},b_{m+2}),\ldots,(a_n,b_n),(a_n,b_n)$$
\end{nota}

\begin{theo}\label{theo:phi(h)=0}If $\varphi(h)=0$, denoting by $m$ the number of $s_k$'s mapped to $1$ by $\varphi$ and assuming these are the first ones,  $\Ker(\varphi)$ is the fundamental group of the Seifert manifold given by the following invariants:
\begin{itemize}
\item (Orientation covers) If $m=0$ and
	\begin{itemize}
	\item if $\epsilon=o_2$ and $\varphi$ maps all $v_j$'s to $1$: $\{0;(o_1,2g-1);F_{OC}\}$
	\item if $\epsilon=n_1$ and $\varphi$ maps all $v_j$'s to $1$: $\{0;(o_1,g-1);F_{OC}\}$
	\item if $\epsilon=n_3$ and $\varphi$ sends only $v_1$ to $1$, or if $\epsilon=n_4$ and $\varphi$ sends only $v_1,v_2$ to $1$: $\{0;(n_2,2g-2);F_{OC}\}$
	\end{itemize}
\item (Exotic cases) If $m=0$ and $\epsilon=n_2,n_3,n_4$ and $\varphi$ maps all $v_j$'s to $1$:
	\begin{itemize}
	\item if $\epsilon=n_2$: $\{2e; (o_1, g-1);F_0\}$
	\item if $\epsilon=n_3,n_4$: $\{0; (o_2, g-1);F_0\}$
	\end{itemize}
\item (Ordinary cases) In all other cases: $\{e';(\epsilon',G);F_m\}$ with
$$e'=\begin{cases}2e&\text{if $\epsilon=o_1,n_2$}\\0&\text{if $\epsilon=o_2,n_1,n_3,n_4$}\end{cases}\qquad\epsilon'=\begin{cases}\epsilon&\text{if $\epsilon=o_1,o_2,n_1,n_2,n_4$}\\n_4&\text{if $\epsilon=n_3$}\end{cases}$$$$\text{and}~G=\begin{cases}\frac m2-1+2g&\text{if $\epsilon=o_1,o_2$}\\m-2+2g&\text{if $\epsilon=n_1,n_2,n_3,n_4$.}\end{cases}$$
\end{itemize}
\end{theo}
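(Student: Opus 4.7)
The plan is to derive this theorem as an overview of the two subsequent subsection results, Theorems~\ref{theo:s1smvj} and~\ref{theo:vj}, by a case analysis on the restriction of $\varphi$ to the base generators $v_j$.

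First I would reorder the exceptional fibres so that those sent to $1$ by $\varphi$ are $s_1,\ldots,s_m$; this is legitimate because the relation $ss'=(ss's^{-1})s$ allows swapping adjacent $s_k$'s (and Lemma~\ref{lem:killvj} ensures that, when no $v_j$ is mapped to $1$, the kernel depends only on the cardinality $m$, not on which exceptional fibres are selected). When $\varphi$ additionally sends every $v_j$ to $0$, Theorem~\ref{theo:s1smvj} applies directly: Reidemeister-Schreier with transversal $\{1,s_1\}$ (or $\{1,v_1\}$ if $m=0$) produces two copies of each $v_j$ and of each $s_k$ for $k>m$, but a single folded copy of $s_k$ for $k\le m$ via $s_k^{a_k}h^{b_k}=1$. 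This explains the shape of $F_m$ and the genus jump $G$ in the ``ordinary'' conclusion.

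When some $v_j$'s are sent to $1$, I would attempt to absorb each such $v_j$ by a substitution $v'_j=uv_j$ with $u\in\Ker(\varphi)$, mimicking the change of generators performed in the proof of Theorem~\ref{theo:phi(h)=1}. Whenever this succeeds, the morphism is reduced to the all-$s_k$ case and the kernel falls under the ordinary branch. The substitution fails precisely when some $\varepsilon_j=-1$ interacts with $\varphi(v_j)=1$ in a way that would force $\varphi(h)\ne 0$; these obstructed configurations are exactly the ``orientation covers'' (where $\Ker(\varphi)$ turns out to be $\pi_1$ of the canonical orientation double cover of $N$, yielding the doubled fibre data $F_{OC}$) and the ``exotic cases''. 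Both are handled by the direct Reidemeister-Schreier computation that will be the substance of Theorem~\ref{theo:vj}.

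The main obstacle is verifying that the enumeration of ``orientation covers'' and ``exotic'' configurations in the statement is exactly the enumeration of morphisms $\varphi$ that cannot be normalised to the all-$s_k$ form, and then checking that the three Seifert invariant families $F_{OC}$, $F_0$, and $F_m$ glue together coherently under this case split — in particular, that the base genus $G$, the new Euler invariant $e'$, and the new type $\epsilon'$ are read correctly in each branch, including the subtle promotion $n_3\mapsto n_4$ that appears in the ordinary case.
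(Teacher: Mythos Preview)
Your high-level plan---deriving Theorem~\ref{theo:phi(h)=0} from Theorems~\ref{theo:s1smvj} and~\ref{theo:vj}---matches the paper's, but the case analysis you sketch is misaligned with the actual split, and this leads to a genuine gap.

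The paper's split is simply $m>0$ versus $m=0$. When $m>0$, Lemma~\ref{lem:killvj} (which you cite, but misstate: it says the kernel is independent of the values $\varphi(v_j)$ \emph{whenever} $m>0$, not ``when no $v_j$ is mapped to~$1$'') reduces every such $\varphi$ to the case where all $\varphi(v_j)=0$, and Proposition~\ref{prop:s1sm} then gives the ordinary formula. The substitution used there is $v'_g=v_gs_1$, which requires an $s_k$ with $\varphi(s_k)=1$; it has nothing to do with the $h$-substitution of Theorem~\ref{theo:phi(h)=1}, and your ``$v'_j=uv_j$ with $u\in\Ker(\varphi)$'' is backwards (one needs $\varphi(u)=1$, and the only candidates are $s_1,\ldots,s_m$ or other $v_i$'s). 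When $m=0$, no such substitution is available at all, and Theorem~\ref{theo:vj} handles everything by direct Reidemeister--Schreier; its output includes orientation covers, exotic cases, \emph{and} ordinary cases with $m=0$ (e.g.\ $\epsilon=o_1$ with some $\varphi(v_j)=1$).

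So your claim that the orientation covers and exotic cases are ``exactly the morphisms that cannot be normalised to the all-$s_k$ form'' is false: every $m=0$ morphism fails to be so normalised, yet most of them land in the ordinary branch. The sentence about ``$\varepsilon_j=-1$ interacting with $\varphi(v_j)=1$ in a way that would force $\varphi(h)\ne0$'' does not correspond to any actual obstruction. The distinction among the three families when $m=0$ comes from the shape of the word $W$ after Reidemeister--Schreier (whether it is a pure product of commutators, and what signs the new $\varepsilon'_j$'s carry), not from any failure of a change of variables.
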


\subsubsection{If $\varphi$ maps $h$ to $0$ but maps some $s_k$'s to $1$.}\label{m>0}

Later on (Lemma \ref{lem:killvj}), we shall reorder the $v_j$'s in the same spirit as we did for the $s_k$'s, and show that the isomorphism type of $\Ker(\varphi)$ is in fact independent from the values of $\varphi$ on the $v_j$'s, which reduces the computation of $\Ker(\varphi)$ to the particular case where $\varphi$ vanishes on all $v_j$'s. But before performing such a reduction, we need to show that in that particular case, $\Ker(\varphi)$ is the fundamental group a non-orientable Seifert manifold whenever $N$ is non-orientable.

So, let us first compute $\Ker\varphi$ in the particular case where $\varphi$ vanishes on all $v_j$'s. The following lemma is an intermediate step for this computation: it gives a presentation of $\Ker\varphi$ where the exceptional fibers gently appear, but where the long relation $W$ and the  $\pm$  signs may still be of a ``hybrid'' form.

\begin{lem}\label{lem:hybrid}If $\varphi$ maps $s_1,\ldots,s_m$ to $1$ and all other generators to $0$ then a presentation of its kernel is:
$$\Ker(\varphi)=\left<{\begin{matrix}s'_1,\ldots,s'_{n'}  \\v'_1,\ldots,v'_{g''}\\z\end{matrix}\left|
\begin{matrix}
 [s'_k,z]\quad\text{and}\quad s_k'^{a'_k}z^{b'_k},&1\le k\le n'\\
v'_jzv_j'^{-1}z^{-\varepsilon'_j},&1\le j \le g'' \\
z^{-2e}s'_1\ldots s'_{n'}W&\end{matrix}\right.}\right>,$$
where
\begin{itemize}
\item
	\begin{itemize}
	\item $n'=m+2(n-m)$,
	\item $(a'_k,b'_k)=(a_k/2,b_k)$ for $k\le m$,
	\item $(a'_k,b'_k)=(a'_{k+n-m},b'_{k+n-m})=(a_k,b_k)$ for $m<k\le n$,
	\end{itemize}
\item
	\begin{itemize}
	\item $g''=(m-2)+2g'$
	\item $\varepsilon'_j=1$ for $j\le m-2$,
	\item$\varepsilon'_{m-2+j}=\varepsilon'_{m-2+g'+j}=\varepsilon_j$ for $1\le j\le g'$,
	\end{itemize}
\item $W=\begin{cases}[v'_1,v'_2]\ldots[v'_{g''-1},v'_{g''}]&\text{if $\epsilon=o_1,o_2$,}\\
[v'_1,v'_2]\ldots[v'_{m-3},v'_{m-2}]v_{m-1}'^2\ldots v_{g''}'^2&\text{if $\epsilon=n_1,n_2,n_3,n_4$.}\end{cases}$
\end{itemize}
\end{lem}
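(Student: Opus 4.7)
The plan is to apply Reidemeister-Schreier's algorithm with the Schreier transversal $T=\{1,q\}$ where $q=s_1$. First one verifies that $\varphi$ being a homomorphism to $\Z_2$ forces $a_k$ even for each $k\le m$ (from $\varphi(s_k^{a_k}h^{b_k})=0$ combined with $\varphi(h)=0$) and $m$ even (from $\varphi$ applied to the long relator, using $\varphi(V)=0$), so $m\ge 2$ and $q=s_1$ is a valid non-trivial representative.

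The algorithm produces one generator $t_{r,g}=rg\overline{rg}^{-1}$ per pair of coset representative and original generator, of which $t_{1,s_1}=1$ is trivial; the rest are $z_1=h$, $z_2=s_1hs_1^{-1}$, $y'_1=s_1^2$, together with $y_k=s_ks_1^{-1}$ and $y'_k=s_1s_k$ for $1<k\le m$, $y_k=s_k$ and $y'_k=s_1s_ks_1^{-1}$ for $m<k\le n$, and $x_j=v_j$, $x'_j=s_1v_js_1^{-1}$. Rewriting each original relator at both cosets, the relation $[s_1,h]=1$ at $r=1$ identifies $z_2=z_1=:z$; the remaining $[s_k,h]$-relations give $[y_k,z]=[y'_k,z]=1$; the $v_jhv_j^{-1}h^{-\varepsilon_j}$-relations give $x_jzx_j^{-1}=x'_jzx'_j{}^{-1}=z^{\varepsilon_j}$; and the fiber relations $s_k^{a_k}h^{b_k}=1$, tracked through the alternating cosets, yield $(y'_1)^{a_1/2}z^{b_1}=1$, $(y_ky'_k)^{a_k/2}z^{b_k}=1$ together with its conjugate for $1<k\le m$, and $y_k^{a_k}z^{b_k}=(y'_k)^{a_k}z^{b_k}=1$ for $k>m$. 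Setting $s'_1:=y'_1$, $s'_k:=y_ky'_k=s_k^2$ for $1<k\le m$, $s'_k:=y_k$ and $s'_{k+n-m}:=y'_k$ for $m<k\le n$, and introducing $u_k:=y_k$ for $1<k\le m$ (so $y'_k=u_k^{-1}s'_k$ and each $u_k$ commutes with $z$), the exceptional-fiber invariants match the statement, and the $u_k$'s are auxiliary basis-like generators with $\varepsilon'=1$.

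The main obstacle is the two long relations $L_1=1$ and $L_q=1$ from the relator $h^{-e}s_1\cdots s_nV$, whose R-S rewrites are $L_1=z^{-e}(y'_2y_3\cdots y'_m)(y_{m+1}\cdots y_n)V(x)$ and $L_q=z^{-e}(y'_1y_2y'_3\cdots y_m)(y'_{m+1}\cdots y'_n)V(x')$, with $V(x)$ and $V(x')$ denoting $V$ with $v_j$ replaced by $x_j$ and $x'_j$ respectively. The crucial observation is that both $V(x)$ and $V(x')$ commute with $z$: each $x_j^2$ centralizes $z$ because $\varepsilon_j^2=1$, and each commutator $[x_j,x_{j'}]$ centralizes $z$ for the same reason. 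Combined with the centrality of $z$ relative to every $y_k,y'_k$, this lets one form the product $L_qL_1$ and pull out $z^{-e}\cdot z^{-e}=z^{-2e}$ on the left. Then using one of the long relations (after substituting $y'_k=u_k^{-1}s'_k$) to solve for a single $u_k$, say $u_m$, eliminates it along with that relation; the surviving long relation, once the remaining products of $y$'s are reorganized into the order $s'_1s'_2\cdots s'_{n'}$, takes the form $z^{-2e}s'_1\cdots s'_{n'}W=1$, with $W$ collecting the commutators dropped among $u_2,\ldots,u_{m-1}$ (the $m-2$ new basis generators $v'_1,\ldots,v'_{m-2}$) followed by $V(x)V(x')$. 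This is exactly the hybrid form asserted: all commutators for orientable base, commutators-then-squares for non-orientable base.

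The delicate last step is the bookkeeping: verifying that the rearrangement of the alternating pattern of $y_k,y'_k$'s drops exactly the commutators $[v'_{2i-1},v'_{2i}]$ in the slots allotted by the hybrid $W$, and that the counts $n'=2n-m$, $g''=m-2+2g'$ (the genus predicted by Riemann-Hurwitz for the ramified double cover of the base with $m$ branch points) and the $\varepsilon'_j$ pattern all come out as claimed. The Euler-characteristic count from Riemann-Hurwitz gives an independent confirmation that these must be correct, reducing the remaining work to careful commutator bookkeeping.
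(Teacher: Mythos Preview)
Your overall strategy matches the paper's: apply Reidemeister--Schreier with transversal $\{1,s_1\}$, obtain the doubled generator set and two long relations, then eliminate one generator via one long relation and reorganize the survivor into Seifert form. Your observation that $V(x)$ and $V(x')$ commute with $z$ is exactly the point the paper notes parenthetically, and your generator and $\varepsilon'_j$ counts are correct.

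The genuine gap is in what you call ``the delicate last step \ldots\ bookkeeping''. This is not bookkeeping: it is the combinatorial heart of the argument, and the paper isolates it as a separate lemma. After the paper's elimination and reordering, the long relation becomes $ACC'(BXB'X^{-1})XX'=z^{2e}$, where $BXB'X^{-1}$ has the shape $g_0g_1\cdots g_{2k}\,g_0^{-1}g_1^{-1}\cdots g_{2k}^{-1}$ with $2k+1=m-1$. That such a word equals a product of exactly $k=\tfrac{m}{2}-1$ commutators $[h_0,h_1]\cdots[h_{2k-2},h_{2k-1}]$, \emph{and} that the $h_i$ together with $g_{2k}$ still freely generate, is a nontrivial free-group identity (the paper's Lemma~\ref{lem:prodcommut}). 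Your simpler substitution $s'_k=y_ky'_k$, $u_k=y_k$ leads to an analogous but messier word, and you would need to state and prove the analogous identity; ``reorganizing'' and ``commutators dropped'' does not do this.

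Your appeal to Riemann--Hurwitz is not a substitute. It confirms the genus $g''=m-2+2g'$ of the base of the covering Seifert fibration, but the lemma asserts a specific \emph{presentation}: that the surviving long relator is exactly $z^{-2e}s'_1\cdots s'_{n'}W$ with $W$ in standard surface form. Knowing the Euler characteristic does not by itself produce a change of free generators realizing that form; that is precisely what Lemma~\ref{lem:prodcommut} supplies.
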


\begin{proof}
Choosing $q=s_1$, Reidemeister-Schreier's algorithm produces a presentation of $\Ker(\varphi)$ with
\begin{itemize}
\item generators:
	\begin{itemize}
	\item for $1\le k\le n$, $(y_k,y'_k)=\begin{cases}(s_kq^{-1},qs_k)&\text{if $k\le m$}\\(s_k,qs_kq^{-1})&\text{if $k>m$}\end{cases}$
	\item for $1\le j\le g'$, $(x_j,x'_j)=(v_j,qv_jq^{-1})$
	\item $(z,z')=(h,qhq^{-1})$.
	\end{itemize}
\item relations:
	\begin{itemize}
	\item $y_1=1$ and $z'=z$
	\item $\forall k=1,\ldots,n$,
		\begin{itemize}
		\item $[y_k,z]=[y'_k,z]=1$
		\item $(y_ky'_k)^{a_k/2}z^{b_k}=1$ if $k\le m$
		\item $y_k^{a_k}z^{b_k}=y_k'^{a_k}z^{b_k}=1$ if $k>m$
		\end{itemize}	
	\item $x_jzx_j^{-1}z^{-\varepsilon_j}=x'_jzx_j'^{-1}z^{-\varepsilon_j}=1$ ($\forall j=1,\ldots,g'$)
	\item (I) $y'_2BCX=z^e$, where $B=y_3y'_4y_5y'_6\ldots y_{m-1}y'_m$, $C=y_{m+1}\ldots y_n$ and $$X=\begin{cases}[x_1,x_2]\ldots[x_{2g-1},x_{2g}]& \text{if $\epsilon=o_1,o_2$}\\x_1^2\ldots x_g^2&\text{if $\epsilon=n_1,n_2,n_3,n_4$}\end{cases}$$
	\item (II) $y'_1y_2y'_3y_4\ldots y'_{m-1}y_mC'X'=z^e$, where $C'=y'_{m+1}\ldots y'_n$ and $$X'=\begin{cases}[x'_1,x'_2]\ldots[x'_{2g-1},x'_{2g}]&\text{if $\epsilon=o_1,o_2$}\\x_1'^2\ldots x_g'^2&\text{if $\epsilon=n_1,n_2,n_3,n_4$.}\end{cases}$$
	\end{itemize}
\end{itemize}
(Note that $X$ and $X'$ allways commute with $z$.)
Let us make a change of generators in this presentation by suppressing $y'_1,y_2,y'_3,y_4,\ldots,y'_{m-1},y_m$ and introducing instead new generators $s'_1,\ldots,s'_m$ defined by:
$$s'_k=\begin{cases}y_2'^{-1}y_3^{-1}y_4'^{-1}y_5^{-1}\ldots y_{k-1}'^{-1}(y'_ky_k)y'_{k-1} \ldots y_5y'_4y_3y'_2&\text{if $k$ is odd}\\
y_2'^{-1}y_3^{-1}y_4'^{-1}y_5^{-1}\ldots y_{k-1}^{-1}(y_ky'_k)y_{k-1} \ldots y_5y'_4y_3y'_2&\text{if $k$ is even}\end{cases}$$
(in particular, $s'_1=y'_1$ and $s'_2=y_2y'_2$).
These new generators still commute with $z$, the relations  $(y_ky'_k)^{a_k/2}z^{b_k}=1$ become $s_k'^{a_k/2}z^{b_k}=1$, and the relation (II) becomes (III): $Ay_2'^{-1}B'C'X'=z^e$, where $A=s'_1\ldots s'_m$ and $B'=y_3^{-1}y_4'^{-1}y_5^{-1}y_6'^{-1}\ldots y_{m-1}^{-1}y_m'^{-1}$. Using (I) to eliminate the generator $y'_2$ from (III), which will become (IV) below, we are left with the following new presentation of $\Ker(\varphi)$:

\begin{itemize}
\item generators:
	\begin{itemize}
	\item $s'_k$ for $1\le k\le m$
	\item $y_3,y'_4,y_5,y'_6,\ldots,y_{m-1},y'_m$
	\item $y_k,y'_k$ for $m<k\le n$
	\item $x_j,x'_j$ for $1\le j\le g'$
	\item $z$
	\end{itemize}
\item relations:
	\begin{itemize}
	\item $[s'_k,z]=1$ and $s_k'^{a_k/2}z^{b_k}=1$ for $1\le k\le m$
	\item $[y_k,z]=[y'_k,z]=1$ and $y_k^{a_k}z^{b_k}=y_k'^{a_k}z^{b_k}$ for $m<k\le n$
	\item $[y_3,z]=\ldots=[y'_m,z]=1$
	\item $x_jzx_j^{-1}z^{-\varepsilon_j}=x'_jzx_j'^{-1}z^{-\varepsilon_j}=1$ for $1\le j\le g'$
	\item (IV) $ABCXB'C'X'=z^{2e}$.
	\end{itemize}
\end{itemize}

This last relation (IV) may be reordered by a new change of generators (replacing some of the generators by conjugates thereof) so as to become $ACC'BXB'X'=z^{2e}$, which we rewrite $ACC'(BXB'X^{-1})XX'=z^{2e}$. The parenthesis $BXB'X^{-1}$ is the product of $m-1$ elements, followed by the product (in the same order) of their inverses. It can be transformed into a product of $\frac m 2-1$ commutators, by another change of generators given by Lemma \ref{lem:prodcommut} below.

Provided that next (classical) lemma, this concludes the proof of Lemma \ref{lem:hybrid}, up to a renaming of the generators.
\end{proof}

\begin{lem}\label{lem:prodcommut}Let $F_{2k+1}$ be the free group over $g_0,\ldots,g_{2k}$. There exist elements $h_0,\ldots,h_{2k-1}\in F_{2k+1}$ such that:
$$g_0g_1\ldots g_{2k}g_0^{-1}g_1^{-1}\ldots g_{2k}^{-1}=[h_0,h_1][h_2,h_3]\ldots[h_{2k-2},h_{2k-1}]$$
and $F_{2k+1}$ is the free group over $h_0,\ldots,h_{2k-1},g_{2k}$.
\end{lem}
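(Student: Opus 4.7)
The natural approach is induction on $k$. The base case $k = 0$ is vacuous ($g_0 g_0^{-1} = 1$, empty product of commutators, $F_1$ already free on $g_0$), so everything happens in the inductive step.

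The key ingredient will be the identity
\[
g_0 g_1 \ldots g_{2k} g_0^{-1} g_1^{-1} \ldots g_{2k}^{-1} = \bigl[g_0 g_1,\; g_2 g_3 \ldots g_{2k} g_1\bigr] \cdot \bigl(g_2 g_3 \ldots g_{2k} g_2^{-1} g_3^{-1} \ldots g_{2k}^{-1}\bigr),
\]
which peels off one commutator while leaving a word of exactly the same form on the $2k-1$ generators $g_2,\ldots,g_{2k}$. I would verify it by direct computation: expanding the commutator, the inner $g_1 g_1^{-1}$ collapses and leaves a reversed tail $g_{2k}^{-1}\ldots g_2^{-1}$ which, multiplied against the initial $g_2 g_3 \ldots g_{2k}$ of the second factor, telescopes from the inside out to the identity, restoring precisely the original product. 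This reverse-telescoping is the one nontrivial algebraic observation; finding the correct cubic ansatz $[g_0 g_1, g_2\ldots g_{2k}g_1]$ (with the extra $g_1$ on the right that enables the $g_1 g_1^{-1}$ cancellation) is the main obstacle.

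The rest is assembly. The subgroup $\langle g_2,\ldots,g_{2k}\rangle$ of $F_{2k+1}$ is free of rank $2k-1$, so the inductive hypothesis (with $g_{2k}$ playing the distinguished role) supplies $h'_0,\ldots,h'_{2k-3}$ rewriting the second factor as $[h'_0,h'_1]\ldots[h'_{2k-4},h'_{2k-3}]$, with $\{h'_0,\ldots,h'_{2k-3},g_{2k}\}$ a free basis of this subgroup. Setting $h_0 = g_0 g_1$, $h_1 = g_2 g_3 \ldots g_{2k} g_1$ and $h_{i+2}=h'_i$ gives the desired $k$ commutators. For the free-basis condition, the inversions $g_1 = (g_2\ldots g_{2k})^{-1} h_1$ and $g_0 = h_0 g_1^{-1}$ show that $\{h_0, h_1, g_2,\ldots,g_{2k}\}$ generates $F_{2k+1}$; being of the right cardinality it is a free basis, and substituting the inductive basis of the subgroup finishes the argument.
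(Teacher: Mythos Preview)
Your proof is correct and follows essentially the same inductive scheme as the paper. The paper's proof sets $U_i=g_{2i}\ldots g_{2k}$, $V_i=g_{2i}^{-1}\ldots g_{2k}^{-1}$ and records the recursion $U_iV_i=[g_{2i}g_{2i+1},\,U_{i+1}g_{2i}^{-1}]\,U_{i+1}V_{i+1}$; your identity is the case $i=0$ of this, with the cosmetic difference that you take $h_1=U_1g_1$ instead of $U_1g_0^{-1}$ (these differ by right multiplication by $h_0=g_0g_1$, which does not change $[h_0,h_1]$), and you spell out the free-basis verification that the paper leaves implicit.
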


\begin{proof} Let $U_i=g_{2i}\ldots g_{2k}$ and $V_i=g_{2i}^{-1}\ldots g_{2k}^{-1}$. Then, $$U_kV_k=1\quad\text{and}\quad U_iV_i=[g_{2i}g_{2i+1},U_{i+1}g_{2i}^{-1}]U_{i+1}V_{i+1}.$$\end{proof}

Lemma \ref{lem:hybrid} produces a standard presentation of $\Ker(\varphi)$ only when $\epsilon=o_1$, or when $m=2$ and $\epsilon\ne n_4$. In other cases, the following Lemmas \ref{lem:commutsquar}, \ref{lem:epsiloncommut} and \ref{lem:epsilonsquar} tell how to normalize both $W$ (which must not be a mixture of commutators and squares) and the list of the $\varepsilon'_j$'s (for which the numbers of $1$'s and $-1$'s are partially prescribed).

\begin{lem}\label{lem:commutsquar}Let $F_3$ be the free group over $x,y,z$ and $\varepsilon\colon F_3\to\{1,-1\}$ the morphism defined by
$$\varepsilon(x)=\varepsilon(y)=1\qquad\text{and}\qquad\varepsilon(z)=-1.$$
There exist elements $u,v,w$ such that
$$\varepsilon(u)=\varepsilon(v)=\varepsilon(w)=-1\quad\text{and}\quad[x,y]z^2=u^2v^2w^2$$
and $F_3$ is the free group over $u,v,w$.
\end{lem}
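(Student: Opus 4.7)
My plan is to exhibit explicit elements $u,v,w\in F_3$ and verify the three required properties by direct computation.

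The existence of such a triple reflects the classical topological fact $T\#\rp\cong N_3$: both $[x,y]z^2$ and $u^2v^2w^2$ are standard relators for the fundamental group of the closed non-orientable surface of genus $3$, corresponding respectively to the decompositions of this surface as a torus plus a crosscap and as three crosscaps. To translate this homeomorphism into an explicit free-group identity, I would either (a) perform a cut-and-paste on the hexagonal polygon with boundary word $xyx^{-1}y^{-1}zz$, cutting along a diagonal joining a vertex on the commutator side to one on the crosscap side and re-gluing along an $x$- or $y$-edge to produce a new hexagon whose boundary reads as $u^2v^2w^2$, so that $u,v,w$ can be read off directly; or (b) proceed algebraically, applying the easily verified identity $[x,y]=x^2(x^{-1}y)^2 y^{-2}$ to rewrite $[x,y]z^2$ as a product of four squares with $\varepsilon$-signs $+,+,+,-$, and then using a Klein-bottle style identity such as $a^2b^2=[a,ab](ab)^2$ to fuse two adjacent squares so as to convert the four-squares product into a three-squares product with all bases of $\varepsilon=-1$.

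Once explicit formulas for $u,v,w$ are in hand, the three conclusions become routine: the $\varepsilon$-values are read off from the formulas; the identity $u^2v^2w^2=[x,y]z^2$ is a reduction to the same reduced word in the free group; and the basis property follows by inverting the substitution, i.e.\ expressing $x,y,z$ as words in $u,v,w$ and observing that the change of variables is a composition of Nielsen transformations, hence an automorphism of $F_3$. The delicate step will be the second one above, the choice of cut (or of algebraic fusion): a generic diagonal of the hexagon produces a polygon still containing a commutator, so only a carefully chosen cut converts the ``handle plus crosscap'' form into the purely non-orientable ``three crosscaps'' form. Once that cut is pinned down the rest is mechanical.
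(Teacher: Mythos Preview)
Your proposal is a plan, not a proof: you never actually produce $u,v,w$. The paper's proof consists of a single line---take $u=xz$, $v=(zxz)^{-1}yz$, $w=(yz)^{-1}z^2$---after which the three verifications you list at the end are indeed mechanical (one finds for instance $z=uvw$, so $\{u,v,w\}$ generates $F_3$ and is therefore a basis).

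Your route~(a) is sound in principle and would eventually yield such a triple, but you correctly flag the delicate step and then stop before carrying it out. Your route~(b), however, is circular as written. The identity $[x,y]=x^2(x^{-1}y)^2y^{-2}$ does rewrite the element as $a^2b^2c^2d^2$ with $(a,b,c,d)=(x,\,x^{-1}y,\,y^{-1},\,z)$, but these four elements are not a basis of $F_3$: they satisfy $c=(ab)^{-1}$. Applying your ``fusion'' identity $a^2b^2=[a,ab](ab)^2$ then gives
\[
a^2b^2c^2d^2=[a,ab](ab)^2(ab)^{-2}d^2=[a,ab]\,d^2,
\]
which is precisely $[x,y]z^2$ again. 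So step~(b) does not move you toward a three-squares expression; it undoes step one. A genuine reduction of a product of four squares to three (with control on the $\varepsilon$-signs) would require four \emph{free} bases, i.e.\ rank~$4$, which is not the situation here. In short, the lemma is most efficiently proved exactly as the paper does: guess a formula and check it.
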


\begin{proof}Take for instance $ u=xz$, $v=(zxz)^{-1}yz$ and $w=(yz)^{-1}z^2$.\end{proof}

\begin{lem}\label{lem:epsiloncommut}Let $F_4$ be the free group over $x,y,z,t$ and $\varepsilon\colon F_4\to\{1,-1\}$ the morphism defined by
$$\varepsilon(x)=\varepsilon(y)=1\qquad\text{and}\qquad\varepsilon(z)=\varepsilon(t)=-1.$$
There exist elements $x',y',z',t'\in F_4$ such that
$$\varepsilon(x')=\varepsilon(y')=\varepsilon(z')=\varepsilon(t')=-1\quad\text{and}\quad[x,y][z,t]=[x',y'][z',t']$$
and $F_4$ is the free group over $t',u',v',w'$.\end{lem}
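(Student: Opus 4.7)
The plan is, following the style of Lemma \ref{lem:commutsquar}, to exhibit explicit formulas for $x',y',z',t'$ as short words in $x,y,z,t$ with $\varepsilon$-value $-1$, and to verify by a direct free-group computation that $[x',y'][z',t']=[x,y][z,t]$ and that the substitution is Nielsen-invertible, so that $\{x',y',z',t'\}$ is indeed a free basis of $F_4$.

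The natural first move is to push $x$ and $y$ into the $\varepsilon=-1$ coset by multiplication by $z$, taking for instance $x'=xz$ and $y'=z^{-1}y$. A short calculation then gives $[x',y']=xyz^{-1}x^{-1}y^{-1}z$, which differs from $[x,y]=xyx^{-1}y^{-1}$ only by a conjugation-by-$z$ twist. The remaining task is to choose $z',t'$, each with $\varepsilon$-value $-1$, so that $[z',t']$ absorbs this twist together with the discrepancy with $[z,t]$: explicitly, one needs $[z',t']=[x',y']^{-1}[x,y][z,t]$ as an element of $F_4$.

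The main obstacle is precisely this last step, namely recognising the specific word $[x',y']^{-1}[x,y][z,t]$ as a single commutator of two $\varepsilon=-1$ elements. I expect $z'$ to be a conjugate of $z$ by a short word in $x,y$, and $t'$ to be an appropriately twisted version of $t$ chosen so that the extra factors introduced by the conjugation exactly cancel the residue from the change of generators $x\mapsto xz$, $y\mapsto z^{-1}y$. By analogy with the ``take for instance'' formulas in Lemma \ref{lem:commutsquar}, a small number of guesses of this form should produce a match, verifiable by a routine (if tedious) free-group reduction.

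Once an explicit quadruple $(x',y',z',t')$ is exhibited and the commutator identity is checked letter by letter, Nielsen-invertibility is read off the defining formulas: $x,y,z,t$ are recovered by inverting each substitution in turn as short words in $x',y',z',t'$. This confirms that $\{x',y',z',t'\}$ freely generates $F_4$ and completes the proof.
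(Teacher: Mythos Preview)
Your plan is exactly the paper's approach: the entire proof consists of writing down an explicit quadruple $(x',y',z',t')$ and checking the identity and Nielsen-invertibility. But you have not carried it out. You correctly identify the ``main obstacle'' --- recognising $[x',y']^{-1}[x,y][z,t]$ as a single commutator of two $\varepsilon=-1$ elements that also complete a free basis --- and then stop, saying ``a small number of guesses \ldots\ should produce a match''. For a lemma whose content \emph{is} the explicit formula, this is the whole proof left undone.

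With your specific first move $x'=xz$, $y'=z^{-1}y$, the residue is
\[
[x',y']^{-1}[x,y][z,t]=z^{-1}yxzx^{-1}y^{-1}ztz^{-1}t^{-1},
\]
and finding $z',t'$ with the right $\varepsilon$-values, the right commutator, \emph{and} the basis property is not entirely mechanical. The paper makes a slightly different opening choice,
\[
x'=xyz,\qquad y'=z^{-1}x^{-1},
\]
for which $[x',y']=xyx^{-1}z^{-1}y^{-1}z$, and then the residue telescopes neatly so that
\[
z'=(y^{-1}z)^{-1}z(y^{-1}z),\qquad t'=tz^{-1}(y^{-1}z)
\]
visibly work: one checks $[x',y'][z',t']=[x,y][z,t]$ by direct cancellation, all four $\varepsilon$-values are $-1$, and inverting the substitutions recovers $x,y,z,t$. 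If you want to keep your own $x',y'$, you must actually produce the corresponding $z',t'$ and verify everything; otherwise, adopt the paper's quadruple and do the one-line reduction.
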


\begin{proof}Take for instance
$$x'=xyz,\quad y'=z^{-1}x^{-1},\quad z'=(y^{-1}z)^{-1}z(y^{-1}z)\quad\text{and}\quad t'=tz^{-1}(y^{-1}z).$$
\end{proof}

\begin{lem}\label{lem:epsilonsquar}Let $F_4$ be the free group over $t,u,v,w$ and $\varepsilon\colon F_4\to\{1,-1\}$ the morphism defined by
$$\varepsilon(t)=\varepsilon(u)=\varepsilon(v)=1\qquad\text{and}\qquad\varepsilon(w)=-1.$$
There exist elements $t',u',v',w'\in F_4$ such that
$$\varepsilon(t')=1,\quad \varepsilon(u')=\varepsilon(v')=\varepsilon(w')=-1\quad\text{and}\quad t^2u^2v^2w^2=t'^2u'^2v'^2w'^2$$
and $F_4$ is the free group over $x',y',z',t'$.\end{lem}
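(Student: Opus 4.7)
The plan is to follow the pattern of Lemmas~\ref{lem:commutsquar} and \ref{lem:epsiloncommut}: exhibit four explicit words $t', u', v', w'$ in $t, u, v, w$, then verify three things by direct calculation --- the prescribed $\varepsilon$-values, the free-group identity $t^2u^2v^2w^2 = t'^2u'^2v'^2w'^2$, and the fact that $\{t', u', v', w'\}$ is a free basis of $F_4$.

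To locate candidates, I would first read off the necessary abelianized data. Writing $\bar x$ for the image of $x$ in $F_4^{\mathrm{ab}} = \Z^4$, the $\varepsilon$-constraints demand that the $w$-coordinate of $\bar{t'}$ be even and those of $\bar{u'}, \bar{v'}, \bar{w'}$ be odd; the square identity forces $\bar{t'} + \bar{u'} + \bar{v'} + \bar{w'} = (1,1,1,1)$; and basis-ness forces the associated $4 \times 4$ integer matrix to have determinant $\pm 1$. A simple choice satisfying these three constraints is $\bar{t'} = \bar u$, $\bar{u'} = \bar t + \bar w$, $\bar{v'} = \bar v + \bar w$, $\bar{w'} = -\bar w$, which has determinant $1$. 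The naive lift $t' = u$, $u' = tw$, $v' = vw$, $w' = w^{-1}$ fails the free-group identity --- expanding gives $u^2 tw tw vw vw w^{-2}$, not $t^2u^2v^2w^2$ --- so one must replace some of these by suitable conjugates, exactly as happens in Lemma~\ref{lem:commutsquar}, where $v = (zxz)^{-1}yz$ rather than the abelian minimum $yz$.

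The hard part will be choosing these conjugators so that the cancellations in $t'^2 u'^2 v'^2 w'^2$ telescope precisely to $t^2 u^2 v^2 w^2$ as an identity in $F_4$ (not merely modulo the $N_4$-relator). Once a successful choice is written down, verification is routine: the $\varepsilon$-values follow from multiplicativity of $\varepsilon$, the equality of freely reduced words is a finite reduction, and the basis property follows either from producing an explicit inverse substitution expressing $t, u, v, w$ in terms of $t', u', v', w'$, or from recognising the substitution as a composition of elementary Nielsen moves, each of which is visibly an automorphism of $F_4$.
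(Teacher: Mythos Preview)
Your plan is exactly the paper's: write down four explicit words and verify the three conditions by hand. The paper's entire proof is four lines of formulas, namely
\[
t'=tu^{2}vu^{-1},\qquad t'u'=u^{2}vw,\qquad u'w'=uvw^{2},\qquad u'v'w'=w,
\]
from which the $\varepsilon$-values, the identity $t'^{2}u'^{2}v'^{2}w'^{2}=t^{2}u^{2}v^{2}w^{2}$, and the basis property are immediate checks.

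The gap in your proposal is that you stop precisely at the step that \emph{is} the proof. You correctly identify the abelianized constraints, produce a candidate matrix, observe that the naive lift fails, and then write ``the hard part will be choosing these conjugators''---but you never choose them. Everything before that point is heuristic scaffolding; the lemma is established only once concrete words are on the page and checked. As it stands, your write-up does not prove the lemma.

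Two further remarks. First, your abelianized ansatz $(\bar t',\bar u',\bar v',\bar w')=(\bar u,\ \bar t+\bar w,\ \bar v+\bar w,\ -\bar w)$ is not the one the paper's formulas realize: theirs abelianize to $\bar t'=\bar t+\bar u+\bar v$, $\bar u'=-\bar t+\bar u+\bar w$, $\bar v'=-\bar u-\bar v-\bar w$, $\bar w'=\bar t+\bar v+\bar w$. Both matrices have determinant $\pm1$ and fix $(1,1,1,1)$, so both pass the obvious necessary tests, but you have given no argument that an automorphism of $F_{4}$ fixing $t^{2}u^{2}v^{2}w^{2}$ actually realizes \emph{your} matrix. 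Second, note that the paper presents its solution via the products $t'u'$, $u'w'$, $u'v'w'$ rather than $u',v',w'$ directly; this packaging is what makes the telescoping in $t'^{2}u'^{2}v'^{2}w'^{2}=t'(t'u')(u'v')(v'w')w'$ transparent, and is a useful trick if you want to finish the computation yourself.
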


\begin{proof}Take for instance $t',u',w',v'$ successively defined by:
$$t'=tu^2vu^{-1},\quad t'u'=u^2vw\quad u'w'=uvw^2\quad\text{and}\quad u'v'w'=w.$$\end{proof}

Using Lemmas \ref{lem:commutsquar}, \ref{lem:epsiloncommut} and \ref{lem:epsilonsquar}, we deduce from Lemma  \ref{lem:hybrid}:

\begin{prop}\label{prop:s1sm}If $\varphi$ maps $s_1,\ldots,s_m$ to $1$ and all other generators to $0$ then its kernel is the fundamental group of the Seifert manifold given by
$$\{e'; (\epsilon', G);F_m\}$$
where
$$e'=\begin{cases}2e&\text{if $\epsilon=o_1,n_2$}\\0&\text{if $\epsilon=o_2,n_1,n_3,n_4$}\end{cases}\qquad\epsilon'=\begin{cases}\epsilon&\text{if $\epsilon=o_1,o_2,n_1,n_2,n_4$}\\n_4&\text{if $\epsilon=n_3$}\end{cases}$$
$$G=\begin{cases}\frac m2-1+2g&\text{if $\epsilon=o_1,o_2$}\\m-2+2g&\text{if $\epsilon=n_1,n_2,n_3,n_4$}\end{cases}$$
and $F_m$ as defined in Notation \ref{nota:FOCFm}.
\end{prop}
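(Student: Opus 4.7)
The plan is to start from the hybrid presentation of $\Ker(\varphi)$ produced by Lemma~\ref{lem:hybrid} and, case by case on $\epsilon$, to normalize the word $W$ and the list of signs $\varepsilon'_j$ into standard Seifert form by iterating Lemmas~\ref{lem:commutsquar}, \ref{lem:epsiloncommut}, and \ref{lem:epsilonsquar}. A separate final step will absorb $z^{2e}$ into $W$ whenever the total space is non-orientable.

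For $\epsilon = o_1$ the hybrid already is the standard $o_1$ presentation with $G = (m-2)/2 + 2g$ and $e' = 2e$, so nothing remains to do. For $\epsilon = o_2$, $W$ is still a pure commutator product, but with $(m-2)/2$ ``bad'' commutators (both entries carrying $\varepsilon' = 1$) and $g' = 2g \ge 2$ ``good'' ones (both entries carrying $\varepsilon' = -1$); I would iterate Lemma~\ref{lem:epsiloncommut}, each step trading a bad/good pair for two goods, which is possible because the pool of goods never empties. For $\epsilon = n_1$ and $\epsilon = n_2$, $W$ consists of $(m-2)/2$ commutators followed by $2g$ squares, and I would iterate Lemma~\ref{lem:commutsquar}, feeding each commutator one adjacent square to produce three squares. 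In the $n_2$ case the $\varepsilon'$ pattern matches the lemma as stated, and in the $n_1$ case the same free-group identity $[x,y]z^2 = u^2v^2w^2$ applied with all $\varepsilon = 1$ outputs three $\varepsilon = 1$ squares.

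For $\epsilon = n_3$, the $2g$ squares split as two with $\varepsilon' = 1$ (the two copies of $v'_{m-1}$ inherited from the doubling) and $2g-2 \ge 2$ with $\varepsilon' = -1$; eliminating the commutators via Lemma~\ref{lem:commutsquar} paired only with $\varepsilon' = -1$ squares leaves the two $\varepsilon' = 1$ squares untouched, yielding standard $n_4$ form, whence $\epsilon' = n_4$. For $\epsilon = n_4$ the analogous cleanup leaves four $\varepsilon' = 1$ squares, and one application of Lemma~\ref{lem:epsilonsquar} reduces this to two.

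The final step normalizes $e'$. In each non-orientable total case ($o_2, n_1, n_3, n_4$), a single substitution $v \mapsto v z^{\pm e}$ on a base generator absorbs $z^{2e}$: when $\varepsilon(v) = 1$ one uses $(vz^{-e})^2 = v^2 z^{-2e}$ to shift a square, and when $\varepsilon(v) = \varepsilon(w) = -1$ one uses $[vz^e, w] = z^{-2e}[v, w]$ to shift a commutator, giving $e' = 0$. For $o_1, n_2$ no such identity is available, and $e' = 2e$. The main obstacle I anticipate is the bookkeeping: checking that each iteration step still has a ``good'' generator available (this is where the hypotheses $g \ge 1$ for $o_2, n_1, n_2$, $g \ge 2$ for $n_3$, and $g \ge 3$ for $n_4$ become essential), and arranging that the two surviving $\varepsilon' = 1$ squares end up at positions $1, 2$ in the $n_3, n_4$ cases, which may require a preliminary reordering of the square factors via a further change of generators.
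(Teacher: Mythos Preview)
Your proposal is correct and follows essentially the same route as the paper: start from the hybrid presentation of Lemma~\ref{lem:hybrid} and, case by case on $\epsilon$, normalize $W$ and the list of $\varepsilon'_j$'s using Lemmas~\ref{lem:commutsquar}, \ref{lem:epsiloncommut}, \ref{lem:epsilonsquar}. The only notable differences are that you make explicit the final absorption of $z^{2e}$ into $W$ in the non-orientable cases (yielding $e'=0$), which the paper leaves implicit, and that the reordering of square factors you anticipate needing in the $n_3,n_4$ cases is exactly what the paper carries out via the identity $u^2v^2=(u^2vu^{-2})^2u^2$.
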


\begin{proof}When $\epsilon=o_1$, Lemma \ref{lem:hybrid} directly gives the result and $\epsilon'=o_1$, with $2G=g''=m-2+2g'=m-2+4g$. When $\epsilon=o_2$, the number of $\varepsilon'_j$'s equal to $-1$ is $g'=2g>0$ hence by Lemma \ref{lem:epsiloncommut}, all $\varepsilon'_j$'s can be replaced by $-1$, so $\epsilon'=o_2$ and $2G=m-2+4g$ as in the previous case. When $\epsilon=n_1$, Lemma \ref{lem:commutsquar} may be applied if necessary (i.e. if $m>2$) to replace each commutator by a product of two squares (only a weak form of the Lemma is used here, forgetting about the morphism $\varepsilon$ of its statement). Thus we get $\epsilon'=n_1$ and $G=g''=m-2+2g'=m-2+2g$. When $\epsilon=n_2$, applying again Lemma \ref{lem:commutsquar} if necessary (in its strong form) gives the result and $\epsilon'=n_2$ with $G=m-2+2g$ as in the previous case. When $\epsilon=n_3$, $W=[v'_1,v'_2]\ldots[v'_{m-3},v'_{m-2}]v_{m-1}'^2\ldots v_{m-2+2g}'^2$ and among $\varepsilon'_{m-1},\ldots,\varepsilon'_{m-2+2g}$, there are two $1$'s, but the number of $-1$'s is $2g-2>0$ and we may reorder these $2g$ $\varepsilon'_j$'s to put the two $1$'s first (by a repeated change of variable, using that $u^2v^2=(u^2vu^{-2})^2u^2$). Lemma \ref{lem:commutsquar} again gives the conclusion. When $\epsilon=n_4$ the same method allows to transform $W$ into a product of squares but we are left with four $1$'s instead of two. Reordering again to put these four $1$'s at the beginning, Lemma \ref{lem:epsilonsquar} allows to reduce this number of $1$'s to two, hence (again) $\epsilon'=n_4$ (and $G=m-2+2g$).\end{proof}

The next lemma will be used in Theorem \ref{theo:s1smvj} to extend the result of the previous proposition to the general case, where $\varphi$ does not necessarily vanish on all $v_j$'s.

\begin{lem}\label{lem:killvj}If two morphisms from $\pi_1(N)$ to $\Z_2$ map $s_1,\ldots,s_m$ ($m>0$) to $1$ and $h,s_{m+1},\ldots,s_n$ to $0$ then their kernels are isomorphic.
\end{lem}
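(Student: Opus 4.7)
The plan is to show that, thanks to $m>0$, any morphism $\varphi$ satisfying the hypothesis is equivalent (via an automorphism of $\pi_1(N)$) to the specific morphism $\varphi_0$ that agrees with $\varphi$ on $h$ and all $s_k$'s but sends every $v_j$ to $0$. This forces $\ker\varphi\cong\ker\varphi_0$, and since any two $\varphi$'s satisfying the hypothesis reduce to the same $\varphi_0$, they have mutually isomorphic kernels. I would argue by induction on the number of $v_j$'s with $\varphi(v_j)=1$, so it suffices to treat the case where $\varphi$ and $\varphi_0$ differ at a single index $j_0$.

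For this special case, the goal is to exhibit a new Seifert-style presentation of $\pi_1(N)$ in which $v_{j_0}$ is replaced by a generator $w_{j_0}$ with $\varphi(w_{j_0})=0$. Since $m>0$ and $\varphi(s_1)=1$, the natural try is $v_{j_0}=s_1w_{j_0}$, i.e.\ $w_{j_0}=s_1^{-1}v_{j_0}$: this gives $\varphi(w_{j_0})=0$, and because $[s_1,h]=1$ the local relation $w_{j_0}hw_{j_0}^{-1}=h^{\varepsilon_{j_0}}$ survives unchanged. Before performing the substitution, I would use the analog of the reordering trick $ss'=(ss's^{-1})s$ already invoked in the paper for the $s_k$'s: the identity $v_iv_{i+1}=(v_iv_{i+1}v_i^{-1})v_i$ lets one cycle the $v_j$'s inside the word $V$, so that $v_{j_0}$ sits in whichever canonical slot of $V$ is most convenient (for example the first squared factor for non-orientable bases, or one half of the first commutator for orientable bases).

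The main obstacle is the long relation $h^{-e}s_1\cdots s_nV=1$: after the substitution, $V$ becomes a hybrid word containing $(s_1w_{j_0})^2$ in place of $w_{j_0}^2$ (or an analogously deformed commutator), which is no longer in Seifert canonical form. The remaining work is to restore this canonical form by a further change of generators, in the spirit of Lemmas \ref{lem:prodcommut}--\ref{lem:epsilonsquar}: small free-group identities that rewrite a mixture of squares, commutators and $s_1^{\pm1}$-factors as a standard product of squares or commutators of brand-new generators, all of them with $\varphi$-value $0$. Since $s_1$ commutes with $h$, the $s_1$-factors can be pushed past $h$-powers without trouble; the delicate point is their interaction with the remaining $v_j$'s, which will generally require adjusting those generators by conjugation as well, and the case analysis has to be carried out separately for each of the six values of $\epsilon$. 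Once this normalization is complete, the adjusted generators produce the desired automorphism $\alpha$ of $\pi_1(N)$ satisfying $\varphi_0\circ\alpha=\varphi$, and the lemma follows.
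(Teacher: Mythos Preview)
Your strategy---absorb each offending $v_{j_0}$ into $s_1$ and then renormalize the long relation---is exactly the paper's idea, and your inductive reduction to a single index is fine. The gap is in the last sentence: you assert that after the free-group rewriting ``the adjusted generators produce the desired automorphism $\alpha$ of $\pi_1(N)$ satisfying $\varphi_0\circ\alpha=\varphi$.'' This is not quite what happens. When you replace $v_{j_0}$ by $s_1^{-1}v_{j_0}$ (or, as the paper does, $v'_g=v_gs_1$ together with $s'_1=v'^{-1}_g s_1^{-1}v'_g$) and push the stray $s_1$-factors back into standard Seifert form, the new generator playing the role of $s_1$ satisfies $(s'_1)^{a_1}h^{-b_1}=1$ whenever the modified $v$-generator \emph{commutes} with $h$ (i.e.\ whenever the relevant $\varepsilon_j=1$). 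So the new presentation is a Seifert presentation of $\pi_1(N)$ with $b_1$ replaced by $-b_1$, and there is no obvious map sending the old generating set to the new one that respects the \emph{original} relation $s_1^{a_1}h^{b_1}=1$. In other words, your change of generators is the identity on the abstract group, not an automorphism carrying $\ker\varphi$ onto $\ker\varphi_0$; the two kernels remain distinct subgroups of $\pi_1(N)$.

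The paper does not try to build such an $\alpha$. Instead, after the change of presentation it applies Proposition~\ref{prop:s1sm} to the \emph{new} Seifert data to read off invariants for $\ker\varphi$, and observes that these agree with the invariants of $\ker\varphi_0$ except possibly for the sign of $b_1$ in the fibre $(a_1/2,\pm b_1)$. The sign flip can only occur when some $\varepsilon_j=1$, i.e.\ when $\epsilon\in\{o_2,n_1,n_3,n_4\}$, and Proposition~\ref{prop:s1sm} shows that in exactly those cases the covering manifold is itself non-orientable, so $(a_1/2,-b_1)$ and $(a_1/2,b_1)$ define the same Seifert manifold. That is the missing ingredient in your argument: either invoke this non-orientability of the kernel to neutralize the sign change, or supply an explicit further automorphism of $\pi_1(N)$ (available precisely because $N$ is non-orientable in those cases) that restores $b_1$.
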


\begin{proof}~

When $\epsilon=n_1,n_2,n_3,n_4$, the idea is to change the presentation of $\pi_1(N)$, using $\varphi(s_1)=1$ to ``kill'' all the $\varphi(v_j)$'s, one after another. The formulas are simpler if we prepare each of these ``murders'' by temporarily permuting the $\varphi(v_j)$s', to put at the end the one whose value we want to switch from $1$ to $0$.

The values $\varphi(v_j)$ and $\varphi(v_{j+1})$ can be exchanged by the following change of generators (leaving the other generators untouched):$$v'_j=v_j^2v_{j+1}v_j^{-2}, v'_{j+1}=v_j.$$

Combining such transpositions, we can {\it temporarily} reorder the $v_j$'s (it may affect the conventional ordering of the $\varepsilon_j$'s when $\epsilon=n_3,n_4$, but this is temporary hence harmless).

Then, the value of $\varphi(v_g)$ can be switched from $1$ to $0$ by:
$$v'_g=v_gs_1,\qquad s'_1=v_g'^{-1}s_1^{-1}v'_g.$$
When $v_g$ anticommutes with $h$, this is in fact (like the previous one) an automorphism of $\pi_1(N)$, but when $v_g$ commutes with $h$, it is only a change of presentation, since $b_1$ is changed into its opposite. Thus, after ``killing'' all $\varphi(v_j)$'s and then applying Proposition \ref{prop:s1sm}, we get the same invariants for $\Ker(\varphi)$ as if all $\varphi(v_j)$'s were already $0$, up to a possible change of sign of $b_1$ in the type $(a_1/2,b_1)$  of the first exceptional fiber. However, this may only happen when some $\varepsilon_j$'s are equal to $1$, i.e. when $\epsilon=n_1,n_3,n_4$. But we see from Proposition \ref{prop:s1sm} that $\Ker(\varphi)$ inherits this non-orientability, which allows to replace in the final result such an $(a_1/2,-b_1)$ (if it occurs) by $(a_1/2,b_1)$.

When $\epsilon=o_1,o_2$, the idea is the same:\\
the value of $(\varphi(v_{2i-1}),\varphi(v_{2i}))$ can be exchanged with that of $(\varphi(v_{2i+1}),\varphi(v_{2i+2}))$ by: $$v'_{2i-1}=[v_{2i-1},v_{2i}]v_{2i+1}[v_{2i-1},v_{2i}]^{-1},\qquad v'_{2i}=[v_{2i-1},v_{2i}]v_{2i+2}[v_{2i+2},v_{2i}]^{-1},$$$$v'_{2i+1}=v_{2i-1},\qquad v'_{2i+2}=v_{2i},$$
and the values of $\varphi(v_j)$ and $\varphi(v_{j+1})$ when $j$ is odd can be exchanged by:
$$v'_j=v_jv_{j+1}v_j^{-1},\qquad v'_{j+1}=v_j^{-1}.$$
The value of $\varphi(v_{2g})$ can be switched from $1$ to $0$ by:
$$s'_1=v_{2g-1}s_1v_{2g-1}^{-1},\qquad v'_{2g-1}=s'_1v_{2g-1}s_1'^{-1},\qquad v'_{2g}=s'_1s_1^{-1}v_{2g}s_1'^{-1}.$$
When $\epsilon=o_2$, this again changes $b_1$ to $-b_1$ but it can be cured if necessary in the final result, by the same argument.

\end{proof}

From Proposition \ref{prop:s1sm} and Lemma \ref{lem:killvj} we immediately deduce:
\begin{theo}\label{theo:s1smvj}If $\varphi$  map $s_1,\ldots,s_m$ ($m>0$) to $1$ and $h,s_{m+1},\ldots,s_n$ to $0$ then its kernel is the fundamental group of the Seifert manifold given by
$$\{e'; (\epsilon', G);F_m\}$$
where
$$e'=\begin{cases}2e&\text{if $\epsilon=o_1,n_2$}\\0&\text{if $\epsilon=o_2,n_1,n_3,n_4$}\end{cases}\qquad\epsilon'=\begin{cases}\epsilon&\text{if $\epsilon=o_1,o_2,n_1,n_2,n_4$}\\n_4&\text{if $\epsilon=n_3$}\end{cases}$$
$$G=\begin{cases}\frac m2-1+2g&\text{if $\epsilon=o_1,o_2$}\\m-2+2g&\text{if $\epsilon=n_1,n_2,n_3,n_4$}\end{cases}$$
and $F_m$ as defined in Notation \ref{nota:FOCFm}.
\end{theo}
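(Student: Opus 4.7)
The plan is to combine the two ingredients already assembled in the preceding subsection, namely Proposition \ref{prop:s1sm} and Lemma \ref{lem:killvj}. Proposition \ref{prop:s1sm} already states exactly the conclusion we want, but under the extra restriction that $\varphi$ vanish on every $v_j$. Lemma \ref{lem:killvj} shows that removing this restriction does not affect the isomorphism type of the kernel, so the argument reduces to chaining the two statements.

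Concretely, I would introduce an auxiliary morphism $\varphi'\colon\pi_1(N)\to\Z_2$ characterized by
$$\varphi'(h)=0,\qquad\varphi'(s_k)=\varphi(s_k)\ \text{for all}\ k,\qquad\varphi'(v_j)=0\ \text{for all}\ j.$$
Both $\varphi$ and $\varphi'$ then satisfy the hypothesis of Lemma \ref{lem:killvj} (they map $s_1,\ldots,s_m$ to $1$ and $h,s_{m+1},\ldots,s_n$ to $0$, with $m>0$), so the lemma produces an isomorphism $\Ker(\varphi)\cong\Ker(\varphi')$. Since $\varphi'$ vanishes on every $v_j$, Proposition \ref{prop:s1sm} applies to $\varphi'$ and gives $\Ker(\varphi')$ as the fundamental group of the Seifert manifold with invariants $\{e';(\epsilon',G);F_m\}$, with $e'$, $\epsilon'$, $G$ and $F_m$ as in the statement. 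Transporting this description across the isomorphism yields the theorem.

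There is essentially no obstacle: all the combinatorial and group-theoretic work has been done in the earlier lemmas and in Proposition \ref{prop:s1sm}. The only delicate point worth flagging explicitly is that the hypothesis $m>0$ is genuinely needed to invoke Lemma \ref{lem:killvj}, as the ``murder'' construction in its proof uses the generator $s_1$ with $\varphi(s_1)=1$ to absorb the contributions of the $v_j$'s. The case $m=0$ is therefore not covered here and is treated separately in Theorem \ref{theo:vj}.
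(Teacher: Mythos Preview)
Your proposal is correct and matches the paper's own argument, which is simply the one-line observation that the theorem follows immediately from Proposition \ref{prop:s1sm} and Lemma \ref{lem:killvj}. Your explicit introduction of the auxiliary morphism $\varphi'$ is a clean way to spell out that deduction.
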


\subsubsection{If $\varphi$ maps some $v_j$'s to $1$ and all other generators $0$}\label{vj}

In this subsection, $\varphi(h)=\varphi(s_1)=\ldots=\varphi(s_n)=0$. Apart from orientation covers and some ``exotic'' cases, $\Ker(\varphi)$ will have the same description as in Theorem \ref{theo:s1smvj}, with $m$ replaced by $0$. The two cases $\epsilon=o_1,o_2$ and $\epsilon=n_1,n_2,n_3,n_4$ will be treated separately (Propositions \ref{prop:vjoi} and \ref{prop:vjni}) and the global result will be rephrased in Theorem \ref{theo:vj}.

\begin{prop}\label{prop:vjoi}When $\varphi$ maps $r>0$  generators $v_j$'s to $1$ and all other generators to $0$ and $\epsilon=o_1$ or $o_2$, $\Ker(\varphi)$ is the fundamental group of the Seifert manifold given by the following invariants, whith $F_{OC}$ and  $F_0$ as defined in Notation \ref{nota:FOCFm}.
\begin{itemize}
\item if $\epsilon=o_1$: $\{2e; (o_1,2g-1);F_0\}$
\item if $\epsilon=o_2$ and $r=g$ (orientation cover): $\{0;(o_1, 2g-1);F_{OC}\}$
\item in all other cases of $\epsilon=o_2$: $\{0;(o_2,2g-1);F_0\}$.
\end{itemize}
\end{prop}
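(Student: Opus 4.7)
The plan is to apply Reidemeister--Schreier with Schreier transversal $\{1, v_1\}$, having first used the transpositions from the proof of Lemma~\ref{lem:killvj} (in the $\epsilon=o_1,o_2$ part) to reorder the $v_j$'s so that $\varphi(v_1)=1$.

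The R--S procedure produces generators $(y_k, y'_k) = (s_k, v_1 s_k v_1^{-1})$ from each $s_k$, generators $(z, z') = (h, v_1 h v_1^{-1})$ from $h$, a single non-trivial generator $v_1^2$ from $v_1$, and, for each $v_j$ with $j \ge 2$, two further $v$-type generators (their explicit form depending on whether $\varphi(v_j)$ equals $0$ or $1$). The commutation $v_1 h v_1^{-1} = h^{\varepsilon_1}$ immediately yields $z' = z^{\varepsilon_1}$ and eliminates $z'$. The relations $[s_k, h]$ and $s_k^{a_k} h^{b_k}$ lift to $[y_k, z] = [y'_k, z] = 1$ and $y_k^{a_k} z^{b_k} = (y'_k)^{a_k} z^{\varepsilon_1 b_k} = 1$. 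The two coset-lifts of the long relation multiply together to give
\[
y_1 \cdots y_n X \cdot y'_1 \cdots y'_n X' = z^{e(1 + \varepsilon_1)},
\]
where $X, X'$ are the two coset-lifts of $V = [v_1, v_2] \cdots [v_{2g-1}, v_{2g}]$. Thus for $\epsilon = o_1$ the two lifts of each exceptional fiber share the invariant $(a_k, b_k)$ and the Euler number is $2e$, a-priori producing $F_0$; for $\epsilon = o_2$ they carry $(a_k, b_k)$ and $(a_k, -b_k)$ and the Euler number is $0$, a-priori producing $F_{OC}$.

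The next step is to massage the above relation into a standard Seifert long relation for the cover: first collect the $y_k, y'_k$ at the front by conjugation, then apply Lemma~\ref{lem:prodcommut} (or a close analogue for the ``doubled'' commutator block) to the residual word in the $v$-generators, which rewrites it as a product of $2g-1$ commutators in new free generators. This identifies the base of the cover as the closed orientable surface of genus $G = 2g-1$, as claimed. To distinguish the three subcases, one computes the commutation between each new $v$-generator and $z$: for $\epsilon = o_1$ every new $v$-generator commutes with $z$, so $\epsilon' = o_1$; for $\epsilon = o_2$ with $r = 2g$, a direct computation gives $(v_j v_1^{-1})\, z\, (v_j v_1^{-1})^{-1} = z^{\varepsilon_1 \varepsilon_j} = z$ since $\varepsilon_1 = \varepsilon_j = -1$, so again $\epsilon' = o_1$ and one obtains $\{0; (o_1, 2g-1); F_{OC}\}$; for $\epsilon = o_2$ with $0 < r < 2g$, at least one R--S generator comes from a $v_j$ with $\varphi(v_j) = 0$ and still anticommutes with $z$, so $\epsilon' = o_2$, and conjugation of the second-lift exceptional fibers by such a generator replaces each $(a_k, -b_k)$ by $(a_k, b_k)$, converting $F_{OC}$ into $F_0$.

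The main obstacle is the combinatorial rewriting of the lifted long relation into a standard surface word, that is, showing that the product of the two coset-lifts of $V$, after conjugations and elimination of the extra generator coming from $v_1^2$, becomes a product of exactly $2g-1$ commutators in $2(2g-1)$ free new generators; this is the direct analogue of the key step in Lemma~\ref{lem:hybrid}. A secondary subtlety, specific to the orientation-cover subcase $\epsilon = o_2$, $r = 2g$, is checking that \emph{every} new $v$-generator truly commutes with $z$ (not merely that the $\varepsilon'_j$'s in the raw R--S presentation all equal $+1$), so that the total space is genuinely of type $o_1$.
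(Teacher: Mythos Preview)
Your overall strategy is the paper's: apply Reidemeister--Schreier with a $v$-generator as coset representative, read off the fibre data and Euler number from the lifts of the $s_k$- and $h$-relations, then rewrite the lifted long relation into a standard surface word of genus $2g-1$ and determine $\epsilon'$ from the new $\varepsilon'_j$'s. The fibre invariants, the value of $e'$, and the distinction of the three subcases via commutation with $z$ are all correct as you describe them.

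The genuine gap is in the ``main obstacle'' you identify. It is \emph{not} the direct analogue of the step in Lemma~\ref{lem:hybrid}, and Lemma~\ref{lem:prodcommut} is the wrong tool. In the paper, one first reorders fully so that $\varphi(v_j)=1\Leftrightarrow j\le r$ (not merely $\varphi(v_1)=1$), takes $q=v_r$, and then splits into the cases $r$ odd and $r$ even, since the shape of the lifted word and the generator to be eliminated differ in the two cases. After using one of the two lifted long relations to eliminate a generator (you cannot simply multiply the two relations together, as your displayed formula suggests; one relation must be used to solve for a generator and substitute into the other), the residual ``doubled'' block has the form
\[
ZZ'=(a_1b_1c_1d_1)\cdots(a_kb_kc_kd_k)\,(c_1^{-1}d_1^{-1}a_1^{-1}b_1^{-1})\cdots(c_k^{-1}d_k^{-1}a_k^{-1}b_k^{-1}),
\]
which is \emph{not} of the shape $g_0\cdots g_{2k}g_0^{-1}\cdots g_{2k}^{-1}$ handled by Lemma~\ref{lem:prodcommut}. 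The paper proves a separate Lemma~\ref{lem:ZZ'} to convert this into a product of $2k$ commutators; your phrase ``or a close analogue'' is exactly this new lemma, and it is a real missing ingredient, not a routine variant. A smaller point: for $\epsilon=o_2$ with $r<2g$ the new $\varepsilon'_j$'s come out as a genuine mixture of $+1$ and $-1$, which the paper counts precisely and then (implicitly via Lemma~\ref{lem:epsiloncommut}) normalises to all $-1$; observing that ``at least one generator anticommutes with $z$'' does not yet yield a standard Seifert presentation of type $o_2$.
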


\begin{proof}Let us reorder $r$ the number of $v_j$'s mapped to 1  in such a way that $\varphi(v_j)=1\Leftrightarrow j\le r$  (by the same method as in Lemma \ref{lem:killvj}). Let $\varepsilon$ be the common value of the $\varepsilon_j$'s, i.e. $\varepsilon=1$ if $\epsilon=o_1$ and to $\varepsilon=-1$ if $\epsilon=o_2$.

Choosing $q=v_r$, Reidemeister-Schreier's algorithm produces a presentation of $\Ker(\varphi)$ with

\begin{itemize}
\item generators:
	\begin{itemize}
	\item for $1\le k\le n$, $(y_k,y'_k)=(s_k,qs_kq^{-1})$
	\item for $1\le j\le 2g$, $$(x_j,x'_j)=\begin{cases}(v_jq^{-1},qv_j)&\text{if $j\le r$}\\(v_j,qv_jq^{-1})&\text{if $j>r$}\end{cases}$$
	\item $(z,z')=(h,qhq^{-1})$
	\end{itemize}
\item relations:
	\begin{itemize}
	\item $x_r=1$, $z'=z^\varepsilon$
	\item $z$ commutes with all $y_k$'s and $y'_k$'s, and with $x_j$ and $x'_j$ for $j\le r$
	\item $x_jzx_j^{-1}z^{-\varepsilon}=x'_jzx_j'^{-1}z^{-\varepsilon}=1$ for $j>r$
	\item $y_k^{a_k}z^{b_k}=y_k'^{a_k}z^{\varepsilon b_k}=1$ ($\forall k=1,\ldots,n$)
	\item (I) $YB=z^e$, (II) $Y'B'=z^{\varepsilon_re}$, where  $Y=y_1\ldots y_n$, $Y'=y'_1\ldots y'_n$, and $B,B'$ are described as follows:
	\end{itemize}
\item if $r$ is odd, $B=Zx'_{r+1}x_{r+1}^{-1}X$ and $B'=Z'x'_rx_{r+1}x_r'^{-1}x_{r+1}'^{-1}X'$, where
$$X=[x_{r+2},x_{r+3}]\ldots[x_{2g-1},x_{2g}],\quad X'=[x'_{r+2},x'_{r+3}]\ldots[x'_{2g-1},x'_{2g}],$$ $$Z=(x_1x'_2x_1'^{-1}x_2^{-1})\ldots(x_{r-2}x'_{r-1}x_{r-2}'^{-1}x_{r-1}^{-1}),$$
$$Z'=(x'_1x_2x_1^{-1}x_2'^{-1})\ldots(x'_{r-2}x_{r-1}x_{r-2}^{-1}x_{r-1}'^{-1});$$
\item if $r$ is even, $B=Zx_{r-1}x'_rx_{r-1}'^{-1}X$ and $B'=Z'x'_{r-1}x_{r-1}^{-1}x_r'^{-1}X'$, where $$X=[x_{r+1},x_{r+2}]\ldots[x_{2g-1},x_{2g}],\quad X'=[x'_{r+1},x'_{r+2}]\ldots[x'_{2g-1},x'_{2g}],$$ $$Z=(x_1x'_2x_1'^{-1}x_2^{-1})\ldots(x_{r-3}x'_{r-2}x_{r-3}'^{-1}x_{r-2}^{-1}),$$
$$Z'=(x'_1x_2x_1^{-1}x_2'^{-1})\ldots(x'_{r-3}x_{r-2}x_{r-3}^{-1}x_{r-2}'^{-1}).$$
\end{itemize}

Assume first that $r$ is odd. Using (II) to eliminate $x'_{r+1}$ in (I), and replacing some of the generators by conjugates thereof (without altering the previous relations) to reorder the subexpressions of the resulting relation, (I) becomes:
$$YY'W=z^{(1+\varepsilon)e},\qquad\text{with}\qquad W=ZZ'[x'_r,x_{r+1}]X'X.$$
Transforming $ZZ'$ by lemma \ref{lem:ZZ'} below (which is stated informally but whose proof gives explicit formulas), $W$ becomes a product of $(r-1)+1+(2g-r-1)$ commutators of new generators, whose associated $\varepsilon'_j$'s are equal to $1$ for the first $2(r-1)+1$ of them and to $\varepsilon$ for the $1+2(2g-r-1)$ last ones. We thus get the following Seifert invariants for $\Ker(\varphi)$:
\begin{itemize}
\item if $\epsilon=o_1$: $\{2e; (o_1,2g-1);F_0\}$
\item if $\epsilon=o_2$ (noting that $1+2(2g-r-1)>0$): $\{0;(o_2, 2g-1);F_{OC}\}$.
\end{itemize}
In the $o_2$ (non-orientable) case, all $-b_k$'s can be replaced by their opposites, i.e. $F_{OC}$ replaced by $F_0$, which concludes the odd case.

Assume now that $r$ is even. Using (I) to eliminate $x'_{r-1}$ and performing similar transformations, (II) becomes:
$$Y'YW=z^{(1+\varepsilon)e},\qquad\text{with}\qquad W=Z'Z[x_{r-1},x'_r]XX'.$$
Using lemma \ref{lem:ZZ'} again, $W$ becomes a product $(r-2)+1+(2g-r)$ commutators and the first $2(r-2)+2$  $\varepsilon'_j$'s are equal to $1$, the $2(2g-r)$ last ones being equal to $\varepsilon$. Hence the conclusion is the same as in the odd case, except when $\varepsilon=-1$ and $2(2g-r)=0$, which corresponds to the orientation cover case of the statement. This concludes the proof of Proposition \ref{prop:vjoi}, provided the next Lemma.
\end{proof}

\begin{lem}\label{lem:ZZ'}In any group, an expression of the form
$$(a_1b_1c_1d_1)\ldots(a_kb_kc_kd_k)(c_1^{-1}d_1^{-1}a_1^{-1}b_1^{-1})\ldots(c_k^{-1}d_k^{-1}a_k^{-1}b_k^{-1})$$
is the product of $2k$ commutators.\end{lem}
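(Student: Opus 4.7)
The plan is to proceed by induction on $k$. For the base case $k=1$, direct manipulation gives
\[
P_1Q_1 \;=\; a_1b_1c_1d_1c_1^{-1}d_1^{-1}a_1^{-1}b_1^{-1} \;=\; a_1b_1\,[c_1,d_1]\,a_1^{-1}b_1^{-1}.
\]
The elementary identity $abXa^{-1}b^{-1}=(abXb^{-1}a^{-1})\,[a,b]$ then rewrites this as $\bigl((a_1b_1)[c_1,d_1](a_1b_1)^{-1}\bigr)\cdot[a_1,b_1]$. Since the conjugate of a commutator is again a commutator (namely $[(a_1b_1)c_1(a_1b_1)^{-1},(a_1b_1)d_1(a_1b_1)^{-1}]$), this exhibits $P_1Q_1$ as a product of exactly two commutators.

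For the inductive step from $k$ to $k+1$, the strategy is to isolate the pair $(P_{k+1},Q_{k+1})$ at one end of the expression so that the base case applies to it. Using the identity $XY=(XYX^{-1})X$ repeatedly, $P_{k+1}$ can be moved past $Q_1,\ldots,Q_k$ at the cost of conjugating each $Q_i$ into $\tilde Q_i := P_{k+1}Q_iP_{k+1}^{-1}$, giving
\[
W_{k+1} \;=\; P_1\cdots P_k\,\tilde Q_1\cdots\tilde Q_k\cdot P_{k+1}Q_{k+1}.
\]
The terminal factor $P_{k+1}Q_{k+1}$ contributes two commutators by the base case. Setting $\tilde a_i := P_{k+1}a_iP_{k+1}^{-1}$ (and similarly for $b_i,c_i,d_i$), the $\tilde Q_i$ have the form $\tilde c_i^{-1}\tilde d_i^{-1}\tilde a_i^{-1}\tilde b_i^{-1}$, so the leftover factor is of the lemma's form in the $\tilde$-variables provided the $P_i$ are also converted. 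Using $P_i=P_{k+1}^{-1}\tilde P_iP_{k+1}$ and a further commutator adjustment in the spirit of Lemma \ref{lem:prodcommut}, the leftover is rewritten as $W_k$ in the $\tilde$-variables, to which the inductive hypothesis supplies the remaining $2k$ commutators.

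The main obstacle is the bookkeeping needed to keep the total commutator count at exactly $2(k+1)$ rather than $2(k+1)+1$: in the leftover factor $P_1\cdots P_k\tilde Q_1\cdots\tilde Q_k$, the $P_i$ are in the original variables while the $\tilde Q_i$ sit in the conjugated ones, and converting $P_i$ to $\tilde P_i$ leaves an extra copy of $P_{k+1}$ trapped in the middle that must be removed through a commutator identity. Executing this cancellation without inflating the count is the delicate combinatorial step; as in the recursion $U_iV_i=[g_{2i}g_{2i+1},U_{i+1}g_{2i}^{-1}]U_{i+1}V_{i+1}$ of Lemma \ref{lem:prodcommut}, I expect the explicit formulas produced at each stage to precisely exhaust the budget of $2(k+1)$ commutators, giving the promised explicit expression of $W_k$ as a product of $2k$ commutators.
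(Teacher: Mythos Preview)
Your inductive step is not complete, and you say so yourself: the leftover factor $P_1\cdots P_k\,\tilde Q_1\cdots\tilde Q_k$ has the $P_i$ in the original letters and the $\tilde Q_i$ in the $P_{k+1}$-conjugated letters, and you never show how to reconcile these without spending an extra commutator. Writing $P_1\cdots P_k=P_{k+1}^{-1}\tilde P_1\cdots\tilde P_kP_{k+1}$ leaves you with $P_{k+1}^{-1}\tilde P_1\cdots\tilde P_kP_{k+1}\tilde Q_1\cdots\tilde Q_k$, which is a conjugate of $\tilde P_1\cdots\tilde P_k\tilde Q_1\cdots\tilde Q_k$ only after you also conjugate the trailing $P_{k+1}Q_{k+1}$, and at that point you have merely re-derived the original problem in new variables. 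Appealing to ``a further commutator adjustment in the spirit of Lemma~\ref{lem:prodcommut}'' and saying you \emph{expect} the count to come out right is not a proof; this is precisely the place where a wrong move costs you one commutator and gives $2k+1$ instead of $2k$.

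The paper sidesteps this difficulty with a much simpler factorization. With $U_k=P_1\cdots P_k$ and $V_k=Q_1\cdots Q_k$ one has
\[
U_{k+1}V_{k+1}=U_kP_{k+1}V_kQ_{k+1}=(U_kV_k)\bigl(V_k^{-1}P_{k+1}V_kQ_{k+1}\bigr),
\]
so there is no need to push $P_{k+1}$ past the $Q_i$'s at all. The first factor is $2k$ commutators by induction (with base case $k=0$, not $k=1$), and the second factor is handled in one line by the identity
\[
V^{-1}abcdV\,c^{-1}d^{-1}a^{-1}b^{-1}=[V^{-1}a,\,bV]\;(ba)\,[V^{-1}c,\,dV]\,(ba)^{-1},
\]
which you can check directly. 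This gives exactly two more commutators, with no variable mismatch and no ``delicate combinatorial step'' left over.
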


\begin{proof} Let $U_k=(a_1b_1c_1d_1)\ldots(a_kb_kc_kd_k)$ and $V_k=(c_1^{-1}d_1^{-1}a_1^{-1}b_1^{-1})\ldots(c_k^{-1}d_k^{-1}a_k^{-1}b_k^{-1})$. Then $U_0V_0=1$, and $U_{k+1}V_{k+1}$ is the product of $U_kV_k$ (which by induction hypothesis is a product of $2k$ commutators) by
$$V_k^{-1}a_{k+1}b_{k+1}c_{k+1}d_{k+1}V_kc_{k+1}^{-1}d_{k+1}^{-1}a_{k+1}^{-1}b_{k+1}^{-1}=$$$$[V_k^{-1}a_{k+1},b_{k+1}V_k](b_{k+1}a_{k+1})[V_k^{-1}c_{k+1},d_{k+1}V_k](b_{k+1}a_{k+1})^{-1}.$$
\end{proof}

Proposition \ref{prop:vjoi} dealt with the case $\epsilon=o_1,o_2$. The next proposition deals with the other case, $\epsilon=n_1,n_2,n_3,n_4$.

\begin{prop}\label{prop:vjni}When $\varphi$ maps some $v_j$'s to $1$ and all other generators to $0$ and $\epsilon=n_1,n_2,n_3$ or $n_4$, $\Ker(\varphi)$ is the fundamental group of the Seifert manifold given by the following invariants, whith $F_{OC}$ and $F_0$ as defined in Notation \ref{nota:FOCFm}.
\begin{itemize}
\item (Orientation covers)
	\begin{itemize}
	\item  if $\epsilon=n_1$ and $\varphi$ maps all $v_j$'s to $1$: $\{0; (o_1, g-1);F_{OC}\}$
	\item if $\epsilon=n_3$ and $\varphi$ sends only $v_1$ to $1$, or if $\epsilon=n_4$ and $\varphi$ sends only $v_1,v_2$ to $1$: $\{0;(n_2,2g-2);F_{OC}\}$
	\end{itemize}
\item (Exotic cases) if $\varphi$ maps all $v_j$'s to $1$ but $\epsilon\ne n_1$
	\begin{itemize}
	\item if $\epsilon=n_2$: $\{2e; (o_1, g-1);F_0\}$
	\item if $\epsilon=n_3,n_4$: $\{0; (o_2, g-1);F_0\}$
	\end{itemize}
\item (Ordinary cases)  in all other cases: $\{e';(\epsilon',2g-2);F_0\}$ with
$$\epsilon'=\begin{cases}\epsilon&\text{if $\epsilon=n_1,n_2,n_4$}\\n_4&\text{if $\epsilon=n_3$}\end{cases}\qquad\text{and}\qquad e'=\begin{cases}2e&\text{if $\epsilon=n_2$}\\0&\text{if $\epsilon=n_1,n_3,n_4$.}\end{cases}$$
\end{itemize}
\end{prop}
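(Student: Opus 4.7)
The plan is to mirror the argument of Proposition \ref{prop:vjoi}, now with non-orientable base, so that $V = v_1^2 \ldots v_g^2$ is a product of squares and the $\varepsilon_j$'s may be mixed. First I would reorder the $v_j$'s so that $\varphi(v_j) = 1$ precisely for $j \le r$, using the transpositions introduced in Lemma \ref{lem:killvj}; when $\epsilon = n_3, n_4$ this may temporarily disturb the conventional ordering of the $\varepsilon_j$'s, but this will be restored at the end, as there. I then apply Reidemeister-Schreier's algorithm with coset representative $q = v_r$ (available because $r \ge 1$): this produces Schreier generators $(y_k, y'_k) = (s_k, q s_k q^{-1})$, generators $(x_j, x'_j) = (v_j q^{-1}, q v_j)$ for $j \le r$ and $(x_j, x'_j) = (v_j, q v_j q^{-1})$ for $j > r$, together with a central $z = h$ satisfying $z' = z^{\varepsilon_r}$. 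The long relation then splits into two relations (I) and (II), one per coset.

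Next, using (I) to eliminate one $x'_j$ from (II) (analogous to the elimination of $x'_{r\pm 1}$ in the orientable case) and reorganizing subexpressions by conjugation, I would reduce the merged relation to the form $YY'W = z^{(\varepsilon_r+1)e}$ with $Y = y_1 \ldots y_n$, $Y' = y'_1 \ldots y'_n$. Because $V$ is a product of squares rather than commutators, the Schreier rewriting gives $W$ the shape of a product of paired squares of the form $x_j x'_j$ or $x'_j x_j$: the first $2(r-1)$ of these carry $\varepsilon'_j = 1$, the last $2(g-r)$ carry $\varepsilon'_j = \varepsilon_j$, with one or two extra squares coming from the junction at $j = r$. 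Lemmas \ref{lem:commutsquar}, \ref{lem:epsiloncommut} and \ref{lem:epsilonsquar} then normalize this presentation into a standard Seifert one, exactly as in Proposition \ref{prop:s1sm}.

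The case analysis splits into three regimes. In the ordinary cases the counts of $+1$'s and $-1$'s in the list $(\varepsilon'_j)$ force the announced type $\epsilon'$ (in particular $n_3 \to n_4$, other types stable) and the genus $G = 2g-2$, matching the $m = 0$ specialization of Theorem \ref{theo:s1smvj}. The orientation cover cases arise precisely when the reordered list of $\varepsilon'_j$'s becomes all $+1$'s (for $\epsilon = n_1$ with all $v_j$'s sent to $1$, giving $o_1$) or has exactly two $+1$'s out of $2g - 2$ (for $\epsilon = n_3, n_4$ with only the $\varepsilon_j = 1$ generators sent to $1$, giving $n_2$). The exotic cases correspond to $r = g$ with $\epsilon = n_2$ or $n_3, n_4$: here the sign structure cannot be absorbed by the normalization lemmas in a non-orientable way, and a direct computation forces the base to be orientable of genus $g - 1$ with type $o_1$ or $o_2$ as stated, with $e'$ computed from the long relation.

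The hard part will be the precise combinatorial identification of $W$ and of the sequence $(\varepsilon'_j)$ after Reidemeister-Schreier: the parity of $r$ and the positions of the $\varepsilon_j = 1$ indices in $n_3, n_4$ must each be tracked, and the exotic cases must be carefully isolated from the ordinary ones, since they are exactly the configurations where the normalization scheme of Lemmas \ref{lem:commutsquar}--\ref{lem:epsilonsquar} cannot produce a non-orientable base. Once this bookkeeping is in place, matching to the announced Seifert invariants reduces to a finite verification.
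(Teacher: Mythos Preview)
Your analogy with Proposition~\ref{prop:vjoi} breaks down at the crucial structural step, and this is a genuine gap rather than a bookkeeping issue.

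When $V=v_1^2\ldots v_g^2$ (non-orientable base), the Schreier rewriting of the long relation in the two cosets gives, essentially,
\[
(\mathrm{I})\quad Y\,x'_1\,(x_2x'_2\ldots x_rx'_r)\,X=z^e,\qquad
(\mathrm{II})\quad Y'\,x'_1\,(x'_2x_2\ldots x'_rx_r)\,X'=z^{\varepsilon_1 e}
\]
(with $q=v_1$; using $q=v_r$ gives the same picture up to relabelling). The two relations share the \emph{same} free generator $x'_1$, so eliminating it equates the two expressions and produces
\[
YY'^{-1}\,X'^{-1}\,Z'^{-1}Z\,X=z^{(1-\varepsilon_1)e},
\]
not $YY'W=z^{(\varepsilon_r+1)e}$. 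Two things follow that your sketch misses. First, the block $Z'^{-1}Z$ is a product of $r-1$ \emph{commutators} of the pairs $(x_j,x'_j)$, not squares. This is exactly why the case $r=g$ (no $X,X'$ left) yields an \emph{orientable} base of genus $g-1$, which is how the orientation covers and exotic cases arise; a product of $2(r-1)$ squares, as you describe, would instead force a non-orientable base of genus $2g-2$ and never produce these cases. Second, the presence of $Y'^{-1}$ (rather than $Y'$) means the second family of fibre generators has invariants $(a_k,-\varepsilon_1 b_k)$, which is where $F_{OC}$ comes from; your $YY'$ form would only ever give $F_0$ and cannot account for the orientation-cover invariants. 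Finally, your exponent $(\varepsilon_r+1)e$ gives $2e$ for $\epsilon=n_1$, whereas the correct Euler number there is $0=(1-\varepsilon_1)e$.

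In short, the non-orientable $V$ does not behave like the orientable one under merging: the key phenomenon is that the ``doubled'' $v_j$'s with $\varphi(v_j)=1$ contribute commutators after elimination, and the two fibre families enter with opposite signs. Once you rewrite the merged relation correctly, the case analysis ($r=g$ versus $r<g$, together with the value of $\varepsilon_1$) falls out directly, and only Lemma~\ref{lem:commutsquar} is needed to convert the $r-1$ commutators into squares when $r<g$.
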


\begin{proof}Assume, like in the proof of Proposition \ref{prop:vjoi}, that $\varphi(v_j)=1\Leftrightarrow j\le r$. We shall have to be cautious about a new phenomenon: this reordering of the $v_j$'s may affect the ordering of the $\varepsilon_j$'s, i.e. we shall try to maintain the convention that the $\varepsilon_j$'s equal to 1 are allways $\varepsilon_1$ when $\epsilon=n_3$ and $\varepsilon_1,\varepsilon_2$ when $\epsilon=n_4$, but we shall locally drop this convention inside the present proof whenever it is not compatible with our reordering of the $v_j$'s.

Choosing $q=v_1$, Reidemeister-Schreier's algorithm produces a presentation of $\Ker(\varphi)$ with
\begin{itemize}
\item generators:
	\begin{itemize}
	\item for $1\le k\le n$, $(y_k,y'_k)=(s_k,qs_kq^{-1})$
	\item for $1\le j\le g$, $$(x_j,x'_j)=\begin{cases}(v_jq^{-1},qv_j)&\text{if $j\le r$}\\(v_j,qv_jq^{-1})&\text{if $j>r$}\end{cases}$$
	\item $(z,z')=(h,qhq^{-1})$
	\end{itemize}
\item relations:
	\begin{itemize}
	\item $x_1=1$, $z'=z^{\varepsilon_1}$
	\item $z$ commutes with all $y_k$'s and $y'_k$'s
	\item $x_jzx_j^{-1}z^{-\varepsilon'_j}=x'_jzx_j'^{-1}z^{-\varepsilon'_j}=1$ whith $\varepsilon'_j=\begin{cases}\varepsilon_j\varepsilon_1&\text{if $j\le r$}\\\varepsilon_j&\text{if $j>r$}\end{cases}$
	\item $y_k^{a_k}z^{b_k}=y_k'^{a_k}z^{\varepsilon_1 b_k}=1$ ($\forall k=1,\ldots,n$)
	\item (I) $Yx'_1ZX=z^e$, where $Y=y_1\ldots y_n$, $X=x_{r+1}^2\ldots x_n^2$ and
$Z=x_2x'_2\ldots x_rx'_r$
	\item (II) $Y'x'_1Z'X'=z^{\varepsilon_1e}$, where $Y'=y'_1\ldots y'_n$, $X'=x'_{r+1}\ldots x'_n$ and $Z'=x'_2x_2\ldots x'_rx_r$.
	\end{itemize}
\end{itemize}
Eliminating $x'_1$, (I) and (II) join to become (III):
$YY'^{-1}X'^{-1}Z'^{-1}ZX=z^{(1-\varepsilon_1)e}$ and $Z'^{-1}Z$ is a product of $r-1$ commutators (of conjugates of inverses of $x_2,x'_2,\ldots,x_r,x'_r$, having the same $\varepsilon'_j$'s).

When $r=g$, $X$ and $X'$ are empty products hence $\epsilon'=o_1$ or $o_2$. Moreover when $\epsilon=n_3,n_4$, since we find $\epsilon'=o_2$, we can replace the $-b_k$'s which occur by their opposites. More precisely Seifert invariants for $\Ker(\varphi)$ when $r=g$ are:
\begin{itemize}
\item if $\epsilon=n_1$: $\{0; (o_1, g-1);F_{OC}\}$
\item if $\epsilon=n_2$: $\{2e; (o_1, g-1);F_0\}$
\item if $\epsilon=n_3,n_4$: $\{0; (o_2, g-1);F_0\}$.
\end{itemize}

When $r<g$, (III) contains a product of $2(g-r)$ squares and $r-1$ commutators, which can be converted to a product of $2g-2$ squares (using Lemma \ref{lem:commutsquar} and taking care of the $\varepsilon'_j$'s to determine $\epsilon'$). Moreover, when the $\epsilon'$ we find corresponds to a non-orientable manifold, all $b'_k=-b_k$'s (if any) can be replaced by $b'_k=b_k$. Hence Seifert invariants for $\Ker(\varphi)$ when $r<g$ are:
\begin{itemize}
\item if $\epsilon=n_2$: $\{2e;n_2,2g-2);F_0\}$
\item if $\epsilon=n_3$ and $\varphi$ sends only $v_1$ to $1$, or if $\epsilon=n_4$ and $\varphi$ sends only $v_1,v_2$ to $1$:
$\{0;(n_2,2g-2);F_{OC}\}$
\item in all other cases: $\{0;\epsilon',2g-2);F_0\}$
with $\epsilon'=n_1$ if $\epsilon=n_1$, and $\epsilon'=n_4$ if $\epsilon=n_3,n_4$.
\end{itemize}
\end{proof}

The following theorem is a synthesis of Propositions \ref{prop:vjoi} and \ref{prop:vjni}.

\begin{theo}\label{theo:vj}When $\varphi$ maps some $v_j$'s to $1$ and all other generators to $0$, $\Ker(\varphi)$ is the fundamental group of the Seifert manifold given by the following invariants, whith $F_{OC}$ and and $F_0$ as defined in Notation \ref{nota:FOCFm}.
\begin{itemize}
\item (Orientation covers)
	\begin{itemize}
	\item if $\epsilon=o_2$ and $\varphi$ maps all $v_j$'s to $1$: $\{0;(o_1,2g-1);F_{OC}\}$
	\item if $\epsilon=n_1$ and $\varphi$ maps all $v_j$'s to $1$: $\{0;(o_1,g-1);F_{OC}\}$
	\item if $\epsilon=n_3$ and $\varphi$ sends only $v_1$ to $1$, or if $\epsilon=n_4$ and $\varphi$ sends only $v_1,v_2$ to $1$: $\{0;(n_2,2g-2);F_{OC}\}$
	\end{itemize}
\item (Exotic cases) if $\epsilon=n_2,n_3,n_4$ and $\varphi$ maps all $v_j$'s to $1$:
	\begin{itemize}
	\item if $\epsilon=n_2$: $\{2e; (o_1, g-1);F_0\}$
	\item if $\epsilon=n_3,n_4$: $\{0; (o_2, g-1);F_0\}$
	\end{itemize}
\item (Ordinary cases)  in all other cases: $\{e';(\epsilon',G);F_0\}$ with
$$e'=\begin{cases}2e&\text{if $\epsilon=o_1,n_2$}\\0&\text{if $\epsilon=o_2,n_1,n_3,n_4$}\end{cases}\qquad\epsilon'=\begin{cases}\epsilon&\text{if $\epsilon=o_1,o_2,n_1,n_2,n_4$}\\n_4&\text{if $\epsilon=n_3$}\end{cases}$$$$\text{and}~G=\begin{cases}2g-1&\text{if $\epsilon=o_1,o_2$}\\2g-2&\text{if $\epsilon=n_1,n_2,n_3,n_4$.}\end{cases}.$$
\end{itemize}
\end{theo}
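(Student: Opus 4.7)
The plan is to assemble Theorem \ref{theo:vj} directly from Propositions \ref{prop:vjoi} and \ref{prop:vjni}, which already partition the hypothesis ``$\varphi(h)=\varphi(s_k)=0$ and at least one $v_j$ maps to $1$'' according to the type $\epsilon$. Proposition \ref{prop:vjoi} handles the two orientable-base types $\epsilon=o_1,o_2$, and Proposition \ref{prop:vjni} handles the four non-orientable-base types $\epsilon=n_1,n_2,n_3,n_4$. All the mathematical substance (the Reidemeister--Schreier rewriting with $q=v_r$ or $q=v_1$, the elimination of $x'_1$ or of $x'_{r\pm 1}$, and the commutator/square normalizations supplied by Lemmas \ref{lem:commutsquar}, \ref{lem:epsiloncommut}, \ref{lem:epsilonsquar} and \ref{lem:ZZ'}) already lives inside those two propositions, so what remains is purely a bookkeeping task: match their outputs against the three-part classification stated in Theorem \ref{theo:vj}.

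First I would verify the orientation cover bullet. Proposition \ref{prop:vjoi} contributes the single entry $\{0;(o_1,2g-1);F_{OC}\}$ arising from $\epsilon=o_2$ with $r=g$, while Proposition \ref{prop:vjni} contributes the three remaining entries, namely $\epsilon=n_1$ with all $v_j\mapsto 1$, $\epsilon=n_3$ with only $v_1\mapsto 1$, and $\epsilon=n_4$ with only $v_1,v_2\mapsto 1$. These are precisely the four items listed in the orientation cover bullet of the theorem. Next I would dispatch the exotic bullet: all three subcases ($\epsilon=n_2,n_3,n_4$ with every $v_j\mapsto 1$) come from Proposition \ref{prop:vjni}, and Proposition \ref{prop:vjoi} adds nothing because its only candidate for an exceptional output, the $\epsilon=o_2$, $r=g$ subcase, has already been absorbed into the orientation cover bullet.

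Finally, for the ordinary bullet, I would observe that the prescriptions for $e'$ and $\epsilon'$ in the theorem are literally the disjoint union of the formulas in the two propositions, so nothing has to be rechecked there. Only the genus $G$ requires attention: Proposition \ref{prop:vjoi} yields $G=2g-1$ whenever $\epsilon\in\{o_1,o_2\}$ and the case is not an orientation cover, and Proposition \ref{prop:vjni} yields $G=2g-2$ whenever $\epsilon\in\{n_1,n_2,n_3,n_4\}$ and the case is neither an orientation cover nor exotic. This is exactly the piecewise formula for $G$ in the theorem.

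The step I expect to be most delicate is combinatorial rather than computational: I must be sure that the trichotomy \emph{orientation cover / exotic / ordinary} partitions the pairs $(\epsilon,r)$ with $r\ge 1$ without overlap or omission. In particular I must keep straight that (i) $\epsilon=o_1$ contributes only to the ordinary bullet, (ii) $\epsilon=o_2$ with $r=g$ is an orientation cover while $\epsilon=o_2$ with $r<g$ is ordinary, and (iii) the ``all $v_j$'s sent to $1$'' condition triggering the exotic cases in $n_2,n_3,n_4$ is genuinely disjoint from the one triggering the orientation covers in $n_1,n_3,n_4$. A short table indexed by $\epsilon$ and by the parameter $r$ (and, for $n_3,n_4$, by \emph{which} $v_j$'s are hit) makes the matching fully transparent and completes the proof.
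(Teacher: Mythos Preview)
Your proposal is correct and follows exactly the paper's own approach: the paper does not give a separate proof of Theorem~\ref{theo:vj} at all, merely declaring it ``a synthesis of Propositions~\ref{prop:vjoi} and~\ref{prop:vjni}.'' Your write-up makes that synthesis explicit by checking, bullet by bullet, that the outputs of the two propositions partition cleanly into the orientation-cover, exotic, and ordinary clauses of the theorem, which is precisely the bookkeeping the paper leaves to the reader.
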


\author{Anne Bauval}\\
\address{\small Institut de Math\'ematiques de Toulouse\\
Equipe Emile Picard, UMR 5580\\
Universit\'e Toulouse III\\
118 Route de Narbonne, 31400 Toulouse - France\\
e-mail: bauval@math.univ-toulouse.fr}

\author{Claude Hayat}\\
\address{\small Institut de Math\'ematiques de Toulouse\\
Equipe Emile Picard, UMR 5580\\
118 Route de Narbonne, 31400 Toulouse - France\\
e-mail: hayat@math.univ-toulouse.fr}

\end{document}